\numberwithin{equation}{section}
\theoremstyle{definition}
\newtheorem{theorem}[equation]{Theorem}
\newtheorem{lemma}[equation]{Lemma}
\newtheorem{proposition}[equation]{Proposition}
\newtheorem{corollary}[equation]{Corollary}
\newtheorem{definition}[equation]{Definition}
\newtheorem{example}[equation]{Example}
\newtheorem{remark}[equation]{Remark}
\newtheorem{claim}[equation]{Claim}
\renewcommand{\SS}{\mathbb{S}}
\newcommand{\pt}{\mathrm{pt}}
\newcommand{\ad}{\operatorname{ad}}
\newcommand{\Id}{\mathrm{Id}}
\newcommand{\BV}{\mathscr{BV}}
\newcommand{\ZZ}{\mathbb{Z}}
\newcommand{\iso}{{\;\stackrel{_\sim}{\to}\;}}
\newcommand{\CC}{\mathbb{C}}
\newcommand{\cO}{\mathcal{O}}
\newcommand{\g}{\mathfrak{g}}
\newcommand{\Hom}{\text{Hom}}
\newcommand{\Det}{\text{Det}}
\newcommand{\gr}{\operatorname{\mathsf{gr}}}
\newcommand{\Aut}{\operatorname{Aut}}
\newcommand{\Der}{\operatorname{Der}}
\newcommand{\CDer}{\operatorname{CDer}}
\newcommand{\End}{\operatorname{End}}
\newcommand{\onto}{\twoheadrightarrow}
\newcommand{\into}{\hookrightarrow}
\newcommand{\Sym}{\operatorname{Sym}}
\newcommand{\CE}{\operatorname{CE}}
\newcommand{\HCE}{\operatorname{HCE}}
\begin{document}
\title{Curved infinity-algebras and their characteristic classes}
\author{Andrey Lazarev and Travis Schedler}
\date{}
\begin{abstract}
  In this paper we study a natural extension of Kontsevich's
  characteristic class construction for $A_\infty$- and
  $L_\infty$-algebras to the case of curved algebras. These define
  homology classes on a variant of his graph homology which allows
  vertices of valence $\geq 1$.  We compute this graph homology, which
  is governed by star-shaped graphs with odd-valence vertices. We also
  classify nontrivially curved cyclic $A_\infty$- and $L_\infty$-
  algebras over a field up to gauge equivalence, and show that these
  are essentially reduced to algebras of dimension at most two with
  only even-ary operations.  We apply the reasoning to compute
  stability maps for the homology of Lie algebras of formal vector
  fields.  Finally, we explain a generalization of these results to
  other types of algebras, using the language of operads. A key
  observation is that operads governing curved algebras are closely
  related to the Koszul dual of those governing unital algebras.
\end{abstract}
\subjclass[2010]{Primary: 16E45, 17B66; Secondary: 18D50, 14D15}
\thanks{ We
  would like to thank B. Vallette for helpful comments on an earlier
  version of this paper. The first author would like to thank the MPIM
  Bonn and the second author the University of Leicester for
  hospitality during some of this work. The first author was partially
  supported by an EPSRC research grant. The second author is a
  five-year fellow of the American Institute of Mathematics, and was
  partially supported by the ARRA-funded NSF grant DMS-0900233.}
\maketitle
\tableofcontents
\section{Introduction}
Kontsevich has associated certain characteristic classes to
finite-dimensional $L_\infty$- or $A_\infty$-algebras equipped with an
invariant inner product in \cite{Kncsg, KFd}. These are expressed in
terms of the homology of certain complexes spanned by graphs with some
additional structures. This construction is by now well-understood
both from the point of view of Lie algebra homology and topological
conformal field theory; see, for example, \cite{HaLacc}.

In this note, we explore a natural generalization of this construction
to the case of \emph{curved} algebras, introduced by Positselski in
\cite{Pnqdc}. It turns out that a complete description of these
classes, and of the homology of the associated graph complexes, is
possible. We show that these are all obtainable from
one-dimensional algebras, and that these classes are zero for algebras
with zero curvature. This contrasts with the corresponding problem for
conventional graph complexes, which is still widely open.

Roughly, this holds for the following reason. The usual (uncurved) $L_\infty$-algebras correspond to odd
vector fields $X$ on the formal neighborhood of zero in a vector space
satisfying $[X,X] = 0$, which have a critical point at the origin.
Equivalence classes of these are complicated.  On the other hand,
nontrivially curved $L_\infty$-algebras correspond to the same vector
fields except having no critical points, and these can all be put in a
standard form.  A similar result extends to the noncommutative setting
for $A_\infty$-algebras.  Cyclic algebras replace vector fields with
Hamiltonian vector fields, in the context of the formal neighborhood
of zero in a symplectic vector space.  In this case, the standard form
is somewhat less trivial. Our first results, Theorems \ref{curvtrivthm}
and \ref{cyccurvtrivthm}, classify all of these vector fields (i.e.,
types of algebras) up to gauge equivalence.  In particular, in the
ordinary (noncyclic) case, nontrivially curved $A_\infty$- or
$L_\infty$-algebras are gauge equivalent (i.e., homotopy isomorphic)
to the algebra with the same curvature and zero multiplication; in the
cyclic case, algebras are gauge equivalent to the direct sum of a
curved algebra of dimension at most two of a certain form (but having
nontrivial multiplications in general), with an algebra having the trivial infinity-structure.

As explained by Kontsevich, the aforementioned graph complexes can be
viewed as computing the stable homology of Lie algebras of symplectic
vector fields on a vector space $W$ (in the $A_\infty$ case, one
should take \emph{noncommutative} symplectic vector fields).  This
motivates us to consider the stability maps.  In this direction, we
prove that the map from the Lie algebra of symplectic vector fields on
$W$ vanishing at the origin to the homology of the Lie algebra of
\emph{all} vector fields on $W \oplus \CC \cdot w$, where $w$ is an
odd vector, is zero.  Similarly, we show the same for the Lie algebra
of noncommutative symplectic vector fields.

The precise relation to the previous result is as follows. Any cyclic
$L_\infty$-algebra structure on $V$ defines an unstable characteristic
class in the homology of the Lie algebra of symplectic vector fields
on the shifted vector space $W = \Pi V$. As $\dim V \rightarrow
\infty$, the homology of this Lie algebra converges to the graph
homology (at least if $V$ is only growing in the purely even or purely
odd direction: see Theorem \ref{GFgraph} below), and the image of the
unstable characteristic class under the stability maps gives, in the
limit, the aforementioned (stable) characteristic class.  Hence, our
result above says that the unstable curved characteristic class of an
algebra with zero curvature already maps to zero under the first
stability map $W:=\Pi V \into W \oplus \CC\cdot w$.

These results hint at a triviality of curved infinity-algebras
from a homological point of view, at least when the cyclic structure
is not considered. A similar result on the triviality of the
corresponding derived categories was obtained recently in
\cite{KLNnv}; another manifestation of this triviality principle is
briefly discussed in the last section of this paper.

Finally, we generalize these results to the operadic setting, i.e., to
types of algebras other than associative and Lie algebras. A key point
is that operads governing curved algebras are closely related to the
Koszul dual of operads governing unital algebras. Namely, a curved
version of an operad is obtained by the cobar construction from a
unital version of the Koszul dual operad. The presence of the unit
provides a contracting homotopy on large summands of the obtained
graph complexes.

In particular, the operadic generalization applies to Poisson,
Gerstenhaber, BV, permutation, and pre-Lie algebras. For the most
part, the generalization is straightforward, and we restrict ourselves
with giving only an outline of arguments in this section.  There is,
however, one important aspect which is less visible in the special
cases of commutative and ribbon graphs: a curved graph complex
associated with a cyclic (or even modular) operad $\mathcal O$ is
quasi-isomorphic to a variant of the \emph{deformation complex of a
  curved $\mathcal O$-algebra on a one-dimensional space}.  Therefore,
this graph complex supports the structure of a differential graded Lie
algebra. This differential Lie algebra, and its Chevalley-Eilenberg
complex, appeared in various guises in the works of Zwiebach-Sen,
Costello and Harrelson-Voronov-Zuniga on quantum master equation,
\cite{SZbias, Cospftft, HVZocms}.
\subsection{Notation and conventions}\label{notsec}
In this paper we work in the category of $\mathbb Z/2$-graded vector
spaces (also known as (super)vector spaces) over $\CC$ although all
results continue to hold in the $\mathbb Z$-graded context and when
$\CC$ is replaced by any field of characteristic zero. We will usually
refer to these graded vector spaces simply as ``spaces.'' The parity of a homogeneous vector $v$ in a space will be denoted by $|v|$. The adjective
`differential graded' will mean `differential $\mathbb Z/2$-graded'
and will be abbreviated as `dg'. A (commutative) differential graded
(Lie) algebra will be abbreviated as
(c)dg(l)a.
All of
our unmarked tensors are understood to be taken over $\CC$. For a $\mathbb Z/2$-graded vector space $V=V_0\oplus V_1$ the symbol $\Pi
 V$ will denote the \emph{parity reversion} of $V$; thus $(\Pi
 V)_0=V_1$ while $(\Pi V)_1=V_0$.

 We will make use of the language of \emph{formal}\footnote{Here the
   word ``formal'' is understood in the sense of a formal
   neighborhood, which differs from the notion of ``formality'' in
   rational homotopy theory.}  spaces and algebras (which exist since
 \cite{Lefat} under the name "linearly compact"; see, e.g.,
 \cite{HaLactha} for a recent treatment relevant to the present work,
 under the present name).  A formal space is an inverse limit of
 finite-dimensional spaces.  
 The functor of taking the linear dual establishes an
 anti-equivalence between the category of (discrete) vector spaces and that of
 formal vector spaces.

 It will always be clear from the context whether
 we work with formal or discrete vector spaces, and we will typically
 not mention this specifically later on; the tensor product of two formal spaces is understood to be their \emph{completed} tensor product.  Furthermore, the symbol $V$
 will be reserved for a discrete space, with its dual $V^*$ therefore
 a formal space.

  In particular, we will work with formal (c)dg(l)as.  The main
 examples will be completed tensor and symmetric algebras on formal
 spaces $W$; these will be denoted by $\hat{T}W$ and
 $\hat{S}W$ respectively. Note that we will \emph{never} consider the
 uncompleted $TW$ and $SW$ when $W$ is formal, and similarly never
 consider the completed $\hat{T}V$ or $\hat{S}V$ when $V$ is discrete,
 so as to stay in either the category of formal spaces or that of
 discrete spaces.

 The Lie algebras of continuous derivations of $\hat{T}W$ and
 of $\hat{S}W$ will be denoted by $\Der(\hat{T}W)$ and
 $\Der(\hat{S}W)$ respectively; we will also consider their Lie
 subalgebras $\Der^0(\hat{T}W)$ and $\Der^0(\hat{S}W)$ consisting of
 derivations having no constant terms.

We mention here two potential pitfalls present in this framework. Firstly,
the categories of discrete and formal vector spaces are not disjoint: the
spaces which are both discrete and formal are precisely finite-dimensional
spaces. And secondly, not every space is either discrete or formal; moreover
such spaces arise as a result of some natural operations with discrete or
formal spaces. For example if $U$ and $W$ are infinite-dimensional formal or
discrete spaces then the vector space $\Hom(U,W)$ is neither formal nor
discrete. Similarly, the Lie algebras
$\Der(\hat{T}V^*)$ and $\Der({T}V)$ will be neither
formal nor discrete if $V$ is infinite-dimensional.

Therefore, to avoid possible confusion, we make the blanket assumption
that the $\ZZ/2$-graded vector space $V$ which appears throughout the
paper, in addition to being discrete as above, is in fact
\emph{finite-dimensional}.  This way all the objects we consider will
live in either the category of formal spaces or the category of
discrete spaces (but not both: so each finite-dimensional space we
consider will be viewed in only one way). The price we pay is that
some of our results are not formulated in maximal generality; namely
Theorem \ref{curvtrivthm} and Claim \ref{opclaim} do not need the
space $V$ to be finite-dimensional (although essentially none of the
exposition needs to be modified to obtain this generalization).

 For a \emph{formal} dgla $\g$ its Chevalley-Eilenberg cohomological
 complex will be denoted by $\CE^\bullet(\g)$.  This is defined as
 follows (note that the definition \emph{differs} from the usual one
 in where completions are taken, since $\g$ is formal rather than
 discrete):
the underlying graded vector space of $\CE^\bullet(\g)$ is $S\Pi{\mathfrak g}^*$ and the differential is given as a sum of two maps $d_{\operatorname I}$ and $d_{\CE}$. Here $d_{\operatorname I}$ and $d_{\CE}$ are both specified by their restriction onto $\Pi{\mathfrak g}^*$ and extended to the whole $S\Pi{\mathfrak g}^*$ by the Leibniz rule; further $d_{\operatorname I}:\Pi{\mathfrak g}^*\to \Pi{\mathfrak g}^*$ is the shift of the dual of the internal differential on ${\mathfrak g}$ whereas $d_{\CE}:\Pi{\mathfrak g}^*\to S^2(\Pi{\mathfrak g}^*)$ is induced by the commutator map $[,]:{\mathfrak g}\otimes {\mathfrak g} \to {\mathfrak g}$.

This is in general a $\ZZ/2$-graded complex.  In the case that the
differential $d$ is zero, it has an additional grading by
\emph{cohomological degree}, i.e., $S^i \Pi\mathfrak{ g}^*$ is in degree
$i$.  The corresponding homological complex is the linear dual:
$\CE_\bullet(\g)=(\CE^\bullet(\g))^*$, which has the underlying formal space $\hat S \Pi \mathfrak{g}$.
Note that in some papers (e.g. \cite{HaLacc}) the Chevalley-Eilenberg complex of a graded Lie algebra ${\mathfrak g}$ is defined using the (in our
case completed) \emph{exterior} algebra $\hat \Lambda {\mathfrak g}$; this definition is equivalent to ours under a canonical isomorphism $\hat S(\Pi{\mathfrak g})\cong \hat \Lambda {\mathfrak g}$ where
\[\Pi g_1\ldots\Pi g_n\mapsto (-1)^{|g_{n}|+2|g_{n-1}|+\ldots+(n-1)|g_1|}g_1\wedge\ldots\wedge g_n.\]

 \section{Gauge equivalence classes of curved (cyclic) $A_\infty$- and
   $L_\infty$-algebras}
\subsection{Curved $A_\infty$- and $L_\infty$- algebras}
We recall the definition of $A_\infty$- and $L_\infty$-algebras
following \cite{HaLacc}, as well as their curved analogues; cf.~e.g.,
\cite{Nicbd} and references therein.

A curved $A_\infty$-algebra structure on a
(finite-dimensional\footnote{As in \S \ref{notsec}, $V$ is considered
  as a discrete space throughout; in the present subsection one could
  allow it to be infinite-dimensional and discrete.  We will not make
  further mention of this.}) space $V$ is a continuous odd derivation
$m$ of the formal dga $\hat{T}\Pi V^*$ and a curved $L_\infty$-algebra
structure on $V$ is a continuous odd derivation $m$ of the formal cdga
$\hat{S}\Pi V^*$; additionally $m$ is required to square to zero in
both cases. An ordinary (i.e. uncurved) $A_\infty$- or
$L_\infty$-structure is specified by the requirement that $m$ have no
constant term. The components $m_i:{T}^i\Pi V\to \Pi V$ or ${S}^i\Pi
V\to \Pi V$ of the dual of $m$ are the structure maps of the
corresponding $A_\infty$- or $L_\infty$-structure.

We note that sometimes it is convenient to use the more traditional
way of writing the structure maps of an $A_\infty$- or
$L_\infty$-algebra $V$ as ${T}^iV\to V$ or ${\Lambda}^iV\to V$; these
maps will then be even or odd depending on whether $i$ is even or
odd. To alleviate the notation we will still write $m_i$ for these
maps when the meaning is clear from the context.

We are interested in \emph{gauge equivalence} classes of $A_\infty$- or $L_\infty$- structures on a fixed  space $V$. In particular, this equivalence relation implies other relations found in the literature under the names of homotopy or quasi-isomorphism.

Namely, a gauge equivalence between $(V,m)$ and $(V,m')$ is defined as
a derivation $\xi \in \Der^0(\hat{T}\Pi V^*)$ or $\xi \in
\Der^0(\hat{S} \Pi V^*)$ which is even and satisfies $m' = e^{\ad \xi}
m$.\footnote{In the case that our ground field is not $\CC$, $e^\xi$
  still makes sense if we require additionally that $\xi_1 = 0$ as
  well, i.e., $\xi$ has no linear term.  We can then modify the above
  by saying that a gauge equivalence is a composition of such an
  equivalence (for $\xi_1=0$) with a linear isomorphism of $V$.} In
particular, such a gauge equivalence yields an isomorphism of dgas
$e^\xi: (\hat{T} \Pi V^*,m) \iso (\hat{T} \Pi V^*, m')$ or cdgas
$e^\xi: (\hat{S} \Pi V^*,m) \iso (\hat{S} \Pi V^*, m')$, and we usually
denote the gauge equivalence by $e^\xi$.
\begin{theorem}\label{curvtrivthm}
  If $(V, m)$ is a curved $A_\infty$- or $L_\infty$- algebra for which
  the curvature $m_0 = c \in V_0$ is nonzero, then $m$ is gauge
  equivalent to the structure $m' = c$ with all higher multiplications
  $m'_i = 0$ for $i > 0$.
\end{theorem}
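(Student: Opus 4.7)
The plan is to interpret $m$ as an odd formal vector field on $\Pi V^*$ whose value at the origin is $c \neq 0$, and then to straighten it to the constant vector field $c$ by successive gauge equivalences, in analogy with the classical fact that a nowhere-vanishing vector field can be locally written as $\partial/\partial x$. Since $c \in V_0 = (\Pi V)_1$ is odd and nonzero in $\Pi V$, I first apply a linear change of coordinates (a gauge equivalence over $\CC$) to fix a basis $x_1, \ldots, x_n$ of $\Pi V^*$ with $x_1$ odd and dual to $c$. In these coordinates the constant part of $m$ becomes $\partial := \partial_{x_1}$, the derivation sending $x_1 \mapsto 1$ and $x_i \mapsto 0$ for $i > 1$, and I decompose $m = \partial + \sum_{k \geq 0} m^{(k)}$ by weight, where $m^{(k)}$ raises polynomial degree by $k$ and corresponds to the structure map $m_{k+1}$.

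The key technical input is the contracting homotopy $L_{x_1}$ (left multiplication by $x_1$) for $\partial$: because $\partial$ and $x_1$ are both odd, the super-Leibniz rule yields $\partial(x_1 f) = f - x_1 \partial(f)$, i.e.~$\{\partial, L_{x_1}\} = \Id$, uniformly on both $\hat{T}\Pi V^*$ and $\hat{S}\Pi V^*$. The induction proceeds as follows: suppose $m$ has been reduced to $\partial + \sum_{k \geq K} m^{(k)}$ by previous steps. The Maurer--Cartan equation $m^2 = 0$, examined in weight $K-1$, collapses to $[\partial, m^{(K)}] = 0$ (all other contributions vanish since lower weights have been killed), which on generators reads $\partial(m^{(K)}(x_i)) = 0$. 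The homotopy then produces an even weight-$(K+1)$ derivation $\xi_{K+1}$ defined by $\xi_{K+1}(x_i) := x_1 \cdot m^{(K)}(x_i)$; a short calculation gives $[\partial, \xi_{K+1}] = m^{(K)}$, so conjugation by $e^{\ad \xi_{K+1}}$ cancels $m^{(K)}$ while only affecting terms of weight $\geq K+1$.

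Iterating yields a sequence $\xi_1, \xi_2, \ldots$ of even derivations with $\xi_k$ of weight $k$. The composition $\cdots e^{\ad \xi_2} e^{\ad \xi_1}$ converges weight-by-weight in the formal topology, since only $\xi_1, \ldots, \xi_{N+1}$ contribute to the weight-$N$ piece of the result, and by Baker--Campbell--Hausdorff it equals $e^{\ad \xi}$ for a single even $\xi \in \Der^0$. Composing with the initial linear change of coordinates furnishes the desired gauge equivalence from $m$ to $m'$, whose only nonzero structure map is $m'_0 = c$.

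The main obstacle is propagating the inductive hypothesis: one must verify that at each stage the Maurer--Cartan equation really does collapse to $[\partial, m^{(K)}] = 0$ (which hinges on all previously-killed $m^{(j)}$ with $j < K$ being genuinely zero) and that the contracting homotopy identity $\{\partial, L_{x_1}\} = \Id$ holds with correct super-signs uniformly in the tensor and symmetric settings. Once these points are in place, the rest of the argument is routine formal deformation theory bookkeeping.
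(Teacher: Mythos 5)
Your proof is correct and rests on the same key computation as the paper's: your identity $\partial\circ L_{x_1}+L_{x_1}\circ\partial=\Id$ is precisely the dual formulation of the paper's contracting homotopy $s_if(x_1,\ldots,x_{i+1})=\epsilon(x_1)f(x_2,\ldots,x_{i+1})$ for the complex $(\Der(\hat T\Pi V^*),[\,\cdot\,,c])$, and both work verbatim in the symmetric case. The only difference is packaging: you run the obstruction-theory induction explicitly, conjugating away one weight at a time, whereas the paper invokes the invariance of Maurer--Cartan gauge-equivalence classes under quasi-isomorphisms of dglas (with an $\hbar$-grading to control the filtration); the convergence and Baker--Campbell--Hausdorff points you flag do go through, since your $\xi_k$ all lie in the pronilpotent Lie algebra of derivations with vanishing constant and linear terms.
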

Roughly, the above is saying that, when an (odd, noncommutative)
formal vector field is nonzero evaluated at zero, then it is
equivalent to a constant vector field up to (generally nonlinear) change of coordinates.

As a corollary of the theorem, it follows that any two
nontrivially curved algebras with the same underlying graded vector space $V$
are gauge equivalent.
\begin{proof}[Proof of Theorem \ref{curvtrivthm}]
  We consider the $A_\infty$ case; the $L_\infty$ case is similar.

  Any $A_\infty$-algebra structure $(V,m')$ with $m'_0=m_0=c$ can be
  viewed as a deformation of $(V,m_0)$. Indeed, let us introduce a
  formal parameter $\hbar$; then $(V,m')$ is equivalent to the
  deformed structure $(V,m'_\hbar)$, where $m'_\hbar = m'_0 + \sum_{i
    \geq 1} \hbar^i m'_i$.  This yields an equivalence with deformed
  structures whose $i$-ary operations are homogeneous of degree $i$ in
  $\hbar$.

  The structures of the form $(V,m')$ are governed by the dgla
  $\Der(\hat T \Pi V^*, [,c])$, where $c \in V$ is viewed as an odd
  constant derivation of $\hat T \Pi V^*$.  Formal deformations (such
  as $(V, m'_\hbar)$) are governed by the dgla $\Der(\hat T \Pi V^*[[\hbar]],
  [,c])$.  Gauge equivalences $e^{\xi}$ of Maurer-Cartan elements of
  $\Der(\hat T \Pi V^*, [,c])$, which we can assume satisfy $\xi_0 = 0
  = \xi_1$ (so as to not change the curvature) are identified with
  gauge equivalences $e^{\frac{1}{\hbar}\xi_\hbar}$ (for $\xi_\hbar :=
  \sum_{i} \hbar^i \xi_i$) of the corresponding Maurer-Cartan elements
  of $\Der(\hat T \Pi V^*[[\hbar]], [,c])$, which preserve the grading
  $|\hbar|=1=|V^*|$.

 Since gauge equivalence classes of deformations of Maurer-Cartan
 elements are preserved under quasi-isomorphisms of dglas, it suffices
 to show that $\Der(\hat T \Pi V^*, [,c])$ is acyclic. Let us write
 down this complex $C^\bullet$ explicitly; note that $C^\bullet$ is a version of
 the Hochschild complex in the curved setting.

 Set $C^i:=\Hom((\Pi V)^{\otimes i}, \Pi V)$ with the differential $d:C^i\to
 C^{i-1}$  given by the formula, for $f\in
 C^i$, and $x_1, \ldots, x_{i-1} \in \Pi V$:
\[
df(x_1,\ldots, x_{i-1})=\sum_k(-1)^{|x_1|+\cdots+|x_k|}f(x_1,\ldots, x_{k},c,x_{k+1},\ldots, x_{i-1}).
\]
We construct an explicit contracting homotopy.  Choose an odd linear
map $\epsilon:\Pi V\to\CC$ such that $\epsilon(c)=1$ (here, odd means
that $\epsilon|_{\Pi (V_1) = (\Pi V)_0} = 0$). Define maps $s_i:C^i\to
C^{i+1}$ by the formula, for $f\in C^i$:
\begin{equation} \label{siref}
s_if(x_1,\ldots,x_{i+1})=\epsilon(x_1)f(x_2,\ldots, x_{i+1}).
\end{equation}
Then,
\begin{equation}
d s_i f(x_1,\ldots,x_{i}) + s_{i-1} d f(x_1, \ldots, x_{i}) = \epsilon(c) f(x_1, \ldots, x_{i}) = f(x_1, \ldots, x_{i}). \qedhere
\end{equation}
\end{proof}
\subsection{Cyclic algebras}
We now extend the results of the previous subsection to the case of
algebras with a cyclic inner product.  By an \emph{inner product} on a
$\ZZ/2$-graded vector space $V$ we mean a nondegenerate symmetric
graded bilinear form $( -, - ): V\otimes V \rightarrow \CC$, where
$\CC$ is considered to be in even degree, and $V$ is required to be
finite-dimensional. An \emph{inner product space} is a space equipped
with an inner product.
\begin{definition} Let $(V,m)$ be a finite-dimensional $A_\infty$- or
  $L_\infty$-algebra.  A cyclic inner product on $V$ is an inner
  product $( -, - ): V\otimes V \rightarrow \CC$ for which the tensors
  $( m_i(v_1,\ldots,v_i), v_{i+1})$ are invariant with respect to the
  signed cyclic permutations of arguments:
\[
(
  m_i(v_1,\ldots,v_i), v_{i+1})=
 (-1)^{i+(|v_1|+\ldots+|v_{i}|)|v_{i+1}|}( m_i(v_{i+1},v_1\ldots,v_{i-1}), v_{i})
 \] We call an algebra $(V,m)$ a \emph{cyclic} $A_\infty$- or
 $L_\infty$-algebra when it is equipped with such a cyclic inner
 product.
\end{definition}
We need to define a subspace of $\Der(\hat{T}\Pi V^*)$ of
\emph{cyclic} derivations.  To do so, first note that $\Der(\hat{T}
\Pi V^*) \cong \Hom(\Pi V^*, \hat{T}\Pi V^*)$ via the restriction map.
Using the inner product, we can identify the latter with
$(\Pi V^* \otimes
\hat{T}\Pi V^*)$.  Next, on any tensor power $(\Pi V^*)^{\otimes n}$, we
can define the (graded) cyclic permutation operator $\sigma:
(\Pi V^*)^{\otimes n} \rightarrow (\Pi V^*)^{\otimes n}$. This extends to a
continuous linear automorphism of $(\Pi V^* \otimes \hat{T}\Pi V^*)$, and
yields a continuous linear automorphism of $\Der(\hat{T} \Pi
V^*)$.
\begin{definition}\label{cycderdefn}
  Let $\CDer(\hat{T} \Pi V^*)$ denote the space of derivations
  which are invariant under cyclic permutation: we call these
  \emph{cyclic} derivations.  Similarly, define $\CDer^0(\hat{T} \Pi V^*)$ as those cyclic derivations with zero constant term (i.e., preserving the
augmentation $\hat{T}^{\geq 1} \Pi V^*$). Finally, define the spaces $\CDer(\hat{S} \Pi V^*)$ and $\CDer^0(\hat{S} \Pi V^*)$ analogously.
\end{definition}
Note that $\CDer(\hat{S} \Pi V^*)$ can be
viewed as the subspace of $\CDer(\hat{T} \Pi V^*)$ landing in symmetric
tensors, and similarly for the version with zero constant term.

Given $V$ together with an inner product, cyclic curved
$A_\infty$-structures are the same as odd derivations $\xi \in
\CDer(\hat{T} \Pi V^*)$ which square to zero.  Similarly,
uncurved structures correspond to odd square-zero $\xi \in
\CDer^0(\hat{T} \Pi V^*)$, and the $L_\infty$-versions are
obtained by replacing $\hat{T}$ with $\hat{S}$.  See, e.g.,
\cite{HaLactha} for details.
\begin{definition} A gauge equivalence of cyclic $A_\infty$- or
  $L_\infty$- structures $(V,m)$ and $(V,m')$ on a fixed inner product
  space $V$ is a map $e^\xi$ where $\xi \in \CDer^0(\hat{T}\Pi V^*)$
  or $\xi \in \CDer^0(\hat{S} \Pi V^*)$ satisfies $m' = e^{\ad \xi}
  m$.
\end{definition}
\begin{remark}
  In the literature, the term \emph{symplectic} is sometimes used
  instead of cyclic, the idea being that an inner product
  on $V$ is equivalent to a (constant) symplectic structure on $\Pi
  V$, so that the cyclic derivations on $\hat S \Pi V^*$ or $\hat T \Pi V^*$ are the same
  as formal (possibly noncommutative) symplectic vector fields on $\Pi V$; a cyclic gauge equivalence could then be interpreted as a (formal, noncommutative) symplectomorphism (preserving the $\ZZ/2$-grading).
Note that it follows from this interpretation that
the cyclic derivations of $V$ form a Lie superalgebra, and the cyclic
gauge equivalences a Lie group.
\end{remark}
\begin{theorem}\label{cyccurvtrivthm}\
 \begin{enumerate}
\item[(a)] If $(V, m)$ is a curved $A_\infty$-algebra with a
  cyclic inner product for which the curvature $m_0 = c \in V_0$ is
  nonzero, and $c' \in V$ any even element for which $(c, c') = 1$, then $(V,m)$ is gauge equivalent to the
  structure $m'$ with $m'_0 =c$, and higher multiplications
\begin{gather}
  m_{2i-1}' = 0, i \geq 1, \\
  m_{2i}'(x_1, \ldots, x_{2i}) = \bigl(\prod_{j=1}^{2i} (c', x_j) \bigr) \cdot (m_{2i}(c,c,\ldots,c), c) \cdot c'.
\end{gather}
\item[(b)] If $(V,m)$ is a curved $L_\infty$-algebra with a cyclic inner product for which the curvature $m_0 = c \in V_0$ is nonzero, then $(V,m)$ is gauge equivalent to the
  structure $m'$ with $m'_0 =c$, and higher multiplications $m'_i = 0$ zero for all $i \geq 1$.
\end{enumerate}
\end{theorem}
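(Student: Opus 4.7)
The plan is to extend the strategy used for Theorem \ref{curvtrivthm}: I will identify cyclic gauge equivalence classes of curved cyclic $A_\infty$- or $L_\infty$-structures on $V$ with fixed curvature $c$ with Maurer-Cartan orbits in the cyclic dgla $(\mathfrak{g}, d = [-,c])$, where $\mathfrak{g} := \CDer(\hat T \Pi V^*[[\hbar]])$ (resp.\ $\CDer(\hat S \Pi V^*[[\hbar]])$), and then reduce via the $\hbar$-parameter trick to computing $H^\bullet(\mathfrak{g}, d)$ and matching the answer with the claimed normal forms.

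The first concrete step will be a Hamiltonian identification. I pick $c' \in V_0$ with $(c, c') = 1$ and a complement $W := (\CC c \oplus \CC c')^\perp$, and let $\alpha, \beta \in \Pi V^*$ be the coordinates on $\Pi V$ dual (through the inner product) to $\Pi c, \Pi c'$, with $w_j^*$ corresponding to a basis of $\Pi W$. Then $\CDer(\hat S \Pi V^*) \cong \hat S^{\geq 1} \Pi V^*$ (Hamiltonian functions modulo constants) and $\CDer(\hat T \Pi V^*) \cong (\hat T^{\geq 1} \Pi V^*)^{\cyc}$ (cyclic words); the element $m_0$ corresponds to the linear Hamiltonian $\beta$, and so $d$ becomes $\{-, \beta\} = \partial_\alpha$, the derivation sending $\alpha \mapsto 1$ and $\beta, w_j^* \mapsto 0$.

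For part (b) I will use the contracting homotopy $s(F) := \alpha \cdot F$ on $\hat S \Pi V^*$: since both $\alpha$ and $\partial_\alpha$ are odd, super-Leibniz yields $\partial_\alpha s + s \partial_\alpha = \mathrm{Id}$, so $\mathfrak{g}$ is acyclic, and every curved cyclic $L_\infty$-algebra with curvature $c$ is therefore gauge equivalent to $(V, m_0)$ with $m'_i = 0$ for $i \geq 1$.

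For part (a) the same $s$ fails to descend to the cyclic quotient $A^{\cyc} := \hat T\Pi V^*/[\hat T \Pi V^*, \hat T\Pi V^*]$, so I will compute $H^\bullet(A^{\cyc}, \partial_\alpha)$ by decomposing into subcomplexes indexed by letter content (note that $\partial_\alpha$ preserves the multiplicities of $\beta$ and each $w_j^*$). In the ``all-$\alpha$'' subcomplex, $\alpha$ being odd forces $[\alpha^{2k}]^{\cyc} = 0$ (because $2\alpha^{2k} = [\alpha, \alpha^{2k-1}] \in [A,A]$), while $[\alpha^{2k+1}]^{\cyc}$ survives, is $\partial_\alpha$-closed, and admits no preimage, contributing $\bigoplus_{k \geq 0} \CC \cdot [\alpha^{2k+1}]^{\cyc}$ to the cohomology. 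For subcomplexes involving at least one $\beta$ or $w_j^*$ I plan to establish acyclicity by choosing, for any cyclic cocycle class, a representative $w$ beginning with a non-$\alpha$ letter and using $[\alpha w]^{\cyc}$ as a preimage, checking that the resulting error term $[\alpha \partial_\alpha w]^{\cyc}$ vanishes using $\partial_\alpha w \in [A,A]$ together with the cyclic relations. Transporting $[\alpha^{2k+1}]^{\cyc}$ back through the inner product yields the cyclic $(2k+1)$-tensor supported on $(\Pi c)^{\otimes(2k+1)}$, which is exactly the $m'_{2k}$ of the statement; the absence of cohomology in other content blocks accounts for $m'_{2i-1} = 0$ and for the vanishing of all mixed-content operations. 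The hardest step will be the acyclicity of the non-``all-$\alpha$'' subcomplexes, where the direct argument above is combinatorially delicate; as a backup I can invoke Connes' long exact sequence, which (since $(\hat T \Pi V^*, \partial_\alpha)$ is acyclic via the same $s$) identifies $H^\bullet(A^{\cyc}, \partial_\alpha) \cong H^{\bullet + 1}([A,A], \partial_\alpha)$ and reduces the problem to a computation on the commutator ideal.
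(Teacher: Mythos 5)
Your overall strategy --- identifying the gauge classes with fixed curvature $c$ with Maurer--Cartan orbits in the cyclic deformation complex carrying the differential $[-,c]$, passing to the Hamiltonian/cyclic-word picture in which this differential becomes $\partial_\alpha$, and decomposing by letter content so that only the all-$\alpha$ cyclic words survive --- is exactly the skeleton of the paper's argument (the paper's subspaces $B^i_k$ realize your content decomposition, and its $B^\bullet_0$ is your span of the classes $[\alpha^{2k+1}]^{\cyc}$). However, the step you yourself flag as the hardest, namely the acyclicity of the content blocks containing at least one letter other than $\alpha$, is where the real work lies, and your sketch of it does not go through as written. The assignment $[w]^{\cyc}\mapsto[\alpha w]^{\cyc}$ is not well defined: two linear representatives of the same cyclic class beginning with different non-$\alpha$ letters give genuinely different cyclic classes after prepending $\alpha$. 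Moreover the error term $[\alpha\,\partial_\alpha w]^{\cyc}$ does not vanish merely because $\partial_\alpha w\in[A,A]$, since left multiplication by $\alpha$ does not descend to the commutator quotient. The repair, which is what the paper does in Lemma \ref{qil}, is to cyclically symmetrize the naive homotopy: define $s'$ by inserting the letter $\alpha$ into \emph{every} cyclic position and summing; a direct computation then shows that $ds'+s'd$ acts on the block with exactly $k$ non-$\alpha$ letters as $k\cdot\Id$, hence is invertible there and zero on the all-$\alpha$ block. Your Connes-sequence backup does not close this gap either: as stated it is just the long exact sequence of $0\to[A,A]\to A\to A^{\cyc}\to 0$ and merely relocates the computation to $H^\bullet([A,A],\partial_\alpha)$.

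A second, smaller inaccuracy: neither $(\hat S^{\geq 1}\Pi V^*,\partial_\alpha)$ nor $(\hat T^{\geq 1}\Pi V^*,\partial_\alpha)$ is acyclic. Your homotopy $F\mapsto\alpha F$ satisfies $\partial_\alpha s+s\partial_\alpha=\Id$ only on Hamiltonians of polynomial degree at least two; on linear Hamiltonians $\partial_\alpha$ lands in the constants, which act as the zero derivation, and the class of $\alpha$ (the constant derivation $c'$) survives in cohomology --- as it must, since otherwise the curvature could be deformed in the direction $c'$ and $(c,c)$ would fail to be a gauge invariant. This does not damage your conclusion in part (b), because the Maurer--Cartan elements and gauges relevant to the classification have no constant term, but the obstruction-theory step must be phrased so as to use only the vanishing of cohomology in arities $\geq 1$ (equivalently, a quasi-isomorphism onto the span of the surviving classes in those arities), as the paper does. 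With these two repairs --- the symmetrized homotopy together with the eigenvalue computation, and the correct bookkeeping of the $0$-ary part --- your argument becomes essentially the paper's proof.
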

One immediately deduces
\begin{corollary}\
\begin{enumerate}
\item[(a)]
Two curved cyclic $A_\infty$-algebra structures $(V,m)$ and $(V,m')$ on the same underlying inner product space $V$ with nonzero curvature $m_0, m_0'$
are gauge equivalent if and only if
\begin{equation}
(m_0, m_0) = (m_0', m_0'), \quad (m_{2i}(m_0, \ldots, m_0), m_0) = (m_{2i}'(m_0', \ldots, m_0'), m_0'), \forall i \geq 1.
\end{equation}
\item[(b)] Two curved cyclic $L_\infty$-algebra structures $(V,m)$ and
  $(V,m')$ on the same underlying inner product space $V$ with
  nonzero curvature $m_0, m_0'$ are gauge equivalent if and only if
  $(m_0, m_0) = (m_0', m_0')$.
\end{enumerate}
\end{corollary}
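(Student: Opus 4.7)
The plan is to combine Theorem \ref{cyccurvtrivthm} with Witt's extension theorem for nondegenerate symmetric bilinear forms. I would first record the key observation that passing to the canonical form does not affect the scalars appearing in the corollary: for the $A_\infty$ canonical form $m'$ of Theorem \ref{cyccurvtrivthm}(a), plugging $x_1 = \cdots = x_{2i} = c$ into the displayed formula and pairing with $c$ gives
\[
(m'_{2i}(c,\ldots,c), c) = (c', c)^{2i+1} \cdot (m_{2i}(c,\ldots,c), c) = (m_{2i}(c,\ldots,c), c)
\]
using $(c, c') = 1$, and clearly $(m'_0, m'_0) = (c,c) = (m_0, m_0)$. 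The $L_\infty$ case involves only the curvature norm, which is trivially preserved by the canonical form.

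For the sufficiency direction, suppose the scalars of $m$ and $m'$ agree. Reduce both $(V,m)$ and $(V, m')$ to canonical form via Theorem \ref{cyccurvtrivthm}; the preceding observation shows their scalars remain equal. By Witt's extension theorem applied to the nondegenerate form on $V_0$, the equality $(m_0, m_0) = (m_0', m_0')$ provides a linear isometry of $V$ carrying $m_0$ to $m_0'$; this is a cyclic linear automorphism, hence a gauge equivalence. After applying it we may assume $m_0 = m_0' =: c$. Selecting a single $c' \in V_0$ with $(c, c')=1$ and re-invoking Theorem \ref{cyccurvtrivthm} with this $c'$ for both structures, the canonical forms literally coincide: in the $L_\infty$ case all higher operations vanish, and in the $A_\infty$ case they are fully determined by $c$, $c'$, and the shared scalars.

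For the necessity direction, invariance of $(m_0, m_0)$ under a gauge equivalence $e^{\ad \xi}$ with $\xi \in \CDer^0$ is immediate, since the curvature transforms as $m_0 \mapsto e^{\xi_1}(m_0)$ under the linear symplectomorphism $e^{\xi_1}$. Invariance of the higher scalars in the $A_\infty$ case would follow from the following rigidity assertion for canonical forms: if two canonical forms with the same $c$ and $c'$ but scalars $s_i, s_i'$ are gauge equivalent, then $s_i = s_i'$. Granting this, if $m \sim m'$ then their canonical forms are gauge equivalent, and after applying Witt to equate their curvatures, they are canonical forms with the same $c$ and $c'$, whose scalars must agree, which by the opening observation are precisely the scalars of $m$ and $m'$.

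The main obstacle is proving the rigidity assertion itself. I would attempt it by decomposing an equivalence $e^{\ad \xi}$ between two such canonical forms as $e^{\ad \xi_1} \circ e^{\ad \tilde\xi_{\geq 2}}$; since both canonical forms share $c$ as curvature, the linear part $\xi_1$ must stabilize $c$, and the nonlinear part must carry one canonical form to another with the same $c$ and $c'$, which one then analyzes degree by degree using the explicit shape of $\tilde m_{2i}$ and the cyclic symmetry to conclude that the scalars $s_i$ cannot be altered. Once this rigidity is secured, the rest of the argument is an immediate combination of Theorem \ref{cyccurvtrivthm} and classical linear algebra.
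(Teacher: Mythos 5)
Your outline is essentially the intended deduction (the paper offers no written proof beyond ``one immediately deduces''), and the sufficiency direction and the reduction of a general gauge equivalence to (linear isometry) $\circ$ (higher-order part) are handled correctly. The one substantive issue is that you treat the rigidity assertion as ``the main obstacle'' still to be proven, and propose a degree-by-degree analysis of $e^{\ad\xi}$ that you do not carry out. In fact this assertion is already contained in the proof of Theorem \ref{cyccurvtrivthm}: Lemma \ref{qil} shows that the inclusion $B^\bullet_0 \into B^\bullet$ is a quasi-isomorphism of dglas, hence induces a bijection on gauge equivalence classes of Maurer-Cartan elements, and since $B^\bullet_0$ is abelian with zero differential its distinct elements --- i.e.\ distinct sequences of scalars $(m_{2i}(c,\ldots,c),c)$ --- lie in distinct gauge classes. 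Combined with your (correct) observations that $m_0 \mapsto e^{\xi_1}(m_0)$ and that linear isometries fixing $c$ manifestly preserve the scalars, this finishes necessity with no further computation. I would caution against the bare-hands route you sketch: $B^\bullet_+$ is \emph{not} an ideal in $B^\bullet$, so the projection onto $B^\bullet_0$ is not a dgla map, and a direct verification that a higher-order gauge transformation cannot shift the scalars is genuinely delicate (e.g.\ the arity-$2$ correction $\tfrac12[\xi,[\xi,c]]$ contributes $(\xi_2(c,c),\xi_2(c,c))$ up to sign, which vanishes only because $\xi_2(c,c)$ is forced to be odd). So replace that paragraph by a citation of the quasi-isomorphism argument.

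Two minor points. First, when you re-invoke Theorem \ref{cyccurvtrivthm} ``with this $c'$ for both structures,'' note that the theorem allows an arbitrary choice of $c'$ with $(c,c')=1$, so canonical forms for different admissible $c'$ (same $c$, same scalars) are automatically gauge equivalent; your argument is fine but this is worth saying. Second, Witt's extension theorem produces an isometry of $V$ that need not lie in the image of $\exp$ on $\mathfrak{osp}(V)$ (already for $V_0 = \CC\cdot c$ the map $c \mapsto -c$ is not an exponential, while $(c,c)=(-c,-c)$), so with the paper's literal definition of a gauge equivalence over $\CC$ as a single $e^\xi$ there is an edge case; this caveat afflicts the corollary as stated rather than your proof specifically, but you should not present ``a linear isometry is a cyclic linear automorphism, hence a gauge equivalence'' as automatic.
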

\begin{remark}
Another, perhaps intuitively more clear, way to understand this result is as follows. Consider the curved cyclic $A_\infty$-algebra $V$ with curvature $c$ whose underlying inner product space is spanned by a single even vector $c$ with $(c,c)=1$ and higher products $m_{2i}(c,\ldots,c)=t_ic$; $m_{2i+1}=0$ for $i=0,1,\ldots$. Here $t_i$ are arbitrary numbers.

Consider also  the curved cyclic $A_\infty$-algebra $V^\prime$ whose underlying space is spanned by two even vectors $c$ and $c^\prime$ with $(c,c^\prime)=(c^\prime,c)=1, (c,c)=(c^\prime,c^\prime)=0$. The $A_\infty$-structure is given as
$m_0 = c$ and, for $i > 0$, $m_{2i}(c,\ldots, c)=t_ic^\prime$, where $t_i$ are arbitrary, and all other higher products are zero.

Then any cyclic curved $A_\infty$-algebra with nonzero curvature is gauge equivalent to the direct sum of either $V$ or $V^\prime$ with an $A_\infty$-algebra having zero $A_\infty$-structure.
\end{remark}
\begin{proof}[Proof of Theorem \ref{cyccurvtrivthm}]
 (a) We modify the previous
  obstruction theory argument. Deformations of the algebra with $m_0=c$ and all
higher operations zero are governed by the subcomplex
\begin{equation}\label{bbeq}
B^\bullet := \CDer(\hat{T}\Pi V^*, [,c]) \subset C^\bullet
\end{equation}
of the one
considered in Theorem \ref{curvtrivthm}.

In Lemma \ref{qil} below, we show that $B^{\bullet}$ is quasi-isomorphic to the subcomplex
$B^{\bullet}_0$ spanned by
 the cocycles
\begin{equation}\label{ceq}
\epsilon^{i+1}(x_1, \ldots, x_i) := \epsilon(x_1) \cdots \epsilon(x_i) c', \quad \epsilon(v) := (c', v).
\end{equation}
Note that, when $i$ is odd, $\epsilon^{i+1} = 0$. Moreover, $B^{\bullet}_0$
is an abelian sub-dgla with zero differential.

Using the lemma, the result follows as in the proof of Theorem
\ref{curvtrivthm}. In more detail, the gauge equivalence classes of
Maurer-Cartan elements of $B^\bullet[[\hbar]]$ and
$B^\bullet_0[[\hbar]]$ which are zero modulo $\hbar$ are
identified. Moreover, the formal Maurer-Cartan elements which are
homogeneous of the form $\sum_{i \geq 1} \hbar^i m_i$ for $m_i \in B^i$
or $B^i_0$ are identified with actual Maurer-Cartan elements $\sum_{i
  \geq 1} m_i$ with zero constant term.  Hence, the gauge equivalence
classes of cyclic curved $A_\infty$- structures with $m_0 = c$ are
identified with gauge equivalence classes of Maurer-Cartan elements of
$B^\bullet_0$ with zero constant term. Since the latter is abelian
with zero differential, all elements are Maurer-Cartan, and define
distinct gauge equivalence classes.

We deduce that all curved $A_\infty$-structures with $m_0 = c$ are gauge
equivalent to one with all odd operations equal to zero, and even operations given by some multiple of the operation
\begin{equation}
(x_1, \ldots, x_{2i}) \mapsto
\bigl(\prod_{j=1}^{2i} (c', x_j) \bigr) \cdot c'.
\end{equation}
It remains to show that this multiple is $(m_{2i}(c,c,\ldots,c), c)$.  This follows by taking the component of the original operation in the direct summand $B^i_0 \subseteq B^i = (B^i_0 \oplus B^i_+)$, using the definition of $B^i_+$ in
\eqref{bipdfn} below. One must show that the result is gauge equivalent to
the initial structure. To do so, one can inductively construct a gauge equivalence killing off the part in $B^i_k$ below for $k \geq 1$ (first inductively on $i$, then reverse-inductively on $k$), parallel to the construction of the contracting homotopy in Lemma \ref{qil}.  We omit the details.

(b) We can apply the same argument as above, except now with
$B:=\CDer(\hat{S}\Pi V^*, [,c])$.  The same argument as above applies
and we deduce the same result, except that this time
$m_{2i}(c,c,\ldots,c) = 0$ for all $i \geq 1$ by skew-symmetry of $m_{2i}$.
\end{proof}

\begin{lemma}\label{qil} Keeping the assumptions and notation
  of the theorem, the complex $B^\bullet$ decomposes as $B^\bullet =
B^\bullet_0 \oplus B^\bullet_+$, where $B^\bullet_+$ is acyclic.  Moreover,
the inclusion $B^\bullet_0 \into B^\bullet$ is a quasi-isomorphism of dglas.
\end{lemma}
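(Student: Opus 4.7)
The plan is to present $B^\bullet$ concretely as a complex of cyclic tensors and then produce an explicit deformation retract onto $B^\bullet_0$.  Via the inner product, $\CDer(\hat T \Pi V^*)$ is identified with the cyclic-invariant tensors $\bigoplus_n ((\Pi V^*)^{\otimes n})^{\ZZ/n}$ (under the graded cyclic action), and the differential $[c,-]$ with the odd contraction operator $\iota_c$, uniquely determined by $\iota_c(\epsilon) = \epsilon(c) = 1$ and $\iota_c|_U = 0$, where $\Pi V^* = \CC \epsilon \oplus U$ with $U := \ker(\text{eval at } c)$.  Thus $\iota_c$ simply deletes an $\epsilon$ from a tensor word (summed over positions), and the number $m(T)$ of tensor factors of $T$ lying in $U$ is preserved by $\iota_c$.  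This induces a grading $B^\bullet = \bigoplus_{m \geq 0} B^{\bullet, m}$ with $B^{\bullet, 0} = B^\bullet_0$ and $B^\bullet_+ = \bigoplus_{m \geq 1} B^{\bullet, m}$, each a subcomplex of $B^\bullet$.

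Next I verify directly that $B^\bullet_0$ is an abelian sub-dgla with zero differential.  The tensor $\epsilon^{\otimes(i+1)}$ is Koszul-cyclic invariant iff $(-1)^i = 1$, i.e., iff $i$ is even; this is the meaning of the paper's claim that $\epsilon^{i+1} = 0$ in $B^\bullet$ for $i$ odd.  For even $i$, a direct calculation gives $\iota_c \epsilon^{\otimes(i+1)}$ as a nonzero multiple of $\epsilon^{\otimes i}$; but $\epsilon^{\otimes i}$ is Koszul-\emph{anti}-cyclic (since $i-1$ is odd), so its cyclic projection vanishes and $d\epsilon^{i+1} = 0$ in $B^\bullet$.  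The necklace bracket on $B^\bullet_0$ also vanishes, since every contraction appearing in $[\epsilon^{i+1}, \epsilon^{j+1}]$ pairs $\epsilon$ with $\epsilon$, and via the inner product this reduces to the vanishing pairing $(c', c') = 0$.

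The main step is to construct a contracting homotopy on each $B^{\bullet, m}$ for $m \geq 1$.  For a cyclic tensor represented by the word $T = a_1 \otimes \cdots \otimes a_n$ with exactly $m$ of the factors $a_k$ lying in $U$, define
\[
 h(T) = \frac{1}{m}\sum_{k\,:\, a_k \in U}(-1)^{|a_1|+\cdots+|a_{k-1}|}\, a_1 \otimes \cdots \otimes a_{k-1} \otimes \epsilon \otimes a_k \otimes \cdots \otimes a_n,
\]
and extend linearly.  The operation preserves cyclic invariance because summation over all $U$-positions is cyclically equivariant (once Koszul signs are tracked), and clearly $m(h(T)) = m(T)$.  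The key identity is $[\iota_c, h] = \Id$ on $B^{\bullet, m}$, read as a graded commutator: when $\iota_c$ removes the newly inserted $\epsilon$, Koszul signs cancel and the contribution is $T$, which when summed over the $m$ insertions and divided by $m$ gives $T$; when $\iota_c$ removes any pre-existing $\epsilon$, the resulting terms match precisely those of $h(\iota_c T)$, after a position relabeling and a sign check.  Hence each $B^{\bullet, m}$ with $m \geq 1$ is acyclic, so $B^\bullet_+$ is acyclic.

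The main obstacle is the Koszul-sign bookkeeping needed both for the cyclic invariance of $h$ and for the identity $[\iota_c, h] = \Id$; these are elementary but tedious computations.  Granted them, the decomposition $B^\bullet = B^\bullet_0 \oplus B^\bullet_+$ with $B^\bullet_+$ acyclic yields the quasi-isomorphism $B^\bullet_0 \into B^\bullet$ of complexes, and this is automatically a dgla morphism because $B^\bullet_0$ is abelian with zero differential.
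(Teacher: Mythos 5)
Your argument follows the paper's own proof in all essentials: you use the same decomposition of $B^\bullet$ by the number of tensor factors lying in $(\CC \cdot c)^\perp$ (your $U=\ker(\mathrm{ev}_c)$), exactly as in \eqref{bipdfn0}, and you prove acyclicity of the positive part by an $\epsilon$-insertion homotopy. The only real difference is cosmetic: the paper uses the single homotopy $s'$ of \eqref{sipfla1} (cyclic symmetrization of insertion in the first slot, i.e.\ insertion at \emph{every} position) and checks that $s'd+ds'$ acts as $k\cdot\Id$ on the $k$-th graded piece, whereas you insert only adjacent to $U$-positions and normalize by $1/m$ so that $[\iota_c,h]=\Id$ holds on the nose on each piece with $m\geq 1$. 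Both versions defer the same Koszul-sign verifications, so this is an acceptable variant rather than a new route.

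One step of your write-up is, however, wrong as stated, even though the conclusion it serves is correct. You justify the vanishing of the bracket on $B^\bullet_0$ by reducing every contraction to the pairing $(c',c')$ and asserting $(c',c')=0$. The theorem only assumes $(c,c')=1$ with $c'$ even; if $V_0=\CC\cdot c$ and $(c,c)=1$ (the one-dimensional algebra $V$ of the Remark following the Corollary), the only admissible choice is $c'=c$, so $(c',c')=1$. The bracket nevertheless vanishes, by the same parity argument you already used to show $\epsilon^{i+1}=0$ for $i$ odd: the nonzero generators of $B^\bullet_0$ are cyclic words $\epsilon^{\otimes(i+1)}$ of \emph{odd} length in the odd letter $\epsilon$, and the necklace bracket of two such words contracts one letter from each and concatenates, producing a multiple of the cyclic word $\epsilon^{\otimes(i+j)}$ of \emph{even} length, whose cyclic symmetrization is zero regardless of the value of $(c',c')$. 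With this repair the proof is complete.
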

\begin{proof}
The second statement follows from the first, since $B^\bullet_0 \into B^\bullet$ is a dgla map.

To prove the first statement, we modify the contracting
  homotopy $s_i$ from Theorem \ref{curvtrivthm} to act on $B^\bullet$. The result will \emph{not}  be a contracting homotopy, but will instead accomplish the desired goal.

Define maps $s_i':B^i\to B^{i+1}$ by
\begin{equation}\label{sipfla1}
s_i' f = \sum_{j=0}^{i+1} \sigma^j (s_i f),
\end{equation}
where $\sigma^j$ is the $j$-th power of the cyclic permutation defined above Definition \ref{cycderdefn}.
We now compute $s_{i-1}' d + d s_i'$.  Assume that $f \in B^i$.  In this
case, we may use the formula
\begin{multline}\label{sipfla2}
s_i' f(x_1,\ldots,x_{i+1}) = \sum_{j=1}^{i+1} (-1)^{|x_1|+\ldots+|x_{j-1}|} \epsilon(x_j) f(x_1, \ldots, x_{j-1}, x_{j+1}, \ldots, x_{i+1})
\\
+ (-1)^{|x_1| + \ldots + |x_{i+1}|} (f(x_1, \ldots, x_i), x_{i+1}) c'.
\end{multline}
Then, we compute
\begin{multline}\label{sipfla3}
(s_{i-1}' d + d s_i')f(x_1, \ldots, x_i) =  (i+1) \epsilon(c) f(x_1, \ldots, x_i) \\ - \bigl( \sum_{j=1}^{i+1} \epsilon(x_j) f(x_1, \ldots, x_{j-1}, c, x_{j+1}, \ldots, x_{i}) + (f(x_1, \ldots, x_{i}), c)c' \bigr).
\end{multline}
Thus, the operator on the RHS is contractible.  Next, for $k \leq i+1$, define subspaces
\begin{gather}
  B^i_k \subset B^i, \quad B^i_k = B^i \cap \bigl(\CC[S_{i+1}] \cdot \bigl( (\CC \cdot \epsilon)^{\otimes (i+1-k)} \otimes ((\CC \cdot c)^\perp)^{\otimes k} \bigr) \bigr), \label{bipdfn0}\\
  B^i_+ := \sum_{k =1}^{i+1} B^i_k. \label{bipdfn}
\end{gather}
Note that $B^\bullet_+$ is a subcomplex, and $B^\bullet = B^\bullet_0
\oplus B^\bullet_+$.

We claim that the RHS of \eqref{sipfla3} acts as $k \cdot \Id$ on
$B^i_k$ for all $k$. This follows directly.  As a result, $s_{i-1}' d + d
s_i'$ restricts to zero on $B^\bullet_0$ and to an automorphism on
$B^\bullet_+$.  This proves the lemma. \qedhere

\end{proof}
\begin{remark}
  Lemma \ref{qil} is equivalent to the statement that the cyclic
  (co)homology of any curved $A_\infty$-algebra $V$ whose structure
  maps are all zero, except for $m_0$ (which is not zero), is
  isomorphic to the cyclic (co)homology of a one-dimensional
  $A_\infty$-algebra $V_1$ having the same property. See
  \cite{GJainfa, HaLactha} for the notion of cyclic (co)homology of
  $A_\infty$-algebras. These (co)homologies could be computed in
  other ways from the above, for instance, with the help of Connes'
  exact sequence connecting Hochschild (co)homology with cyclic
  (co)homology (which appears in \cite{GJainfa} in the curved
  setting), since Theorem \ref{curvtrivthm} shows that the Hochschild
  (co)homologies of $V$ and $V_1$ are trivial.
\end{remark}
\section{Characteristic classes of curved algebras}
\subsection{The uncurved case}\label{ccs}
We briefly recall Kontsevich's construction of characteristic classes
of finite-dimensional $A_\infty$- or $L_\infty$-algebras with cyclic
inner products.
First, we recall the definition of certain graph complexes, for which
cyclic $A_\infty$- or $L_\infty$-algebras will produce cycles.
\begin{definition}
  A graph is a tuple $(H,V,E, \varphi_V, \varphi_E)$ of sets $H, V, E$
  of \emph{half-edges}, \emph{vertices}, and \emph{edges}, and
  surjective maps $\varphi_V: H \rightarrow V, \varphi_E: H
  \rightarrow E$ such that the fibers of $\varphi_E$ all have
  cardinality two.
\end{definition}
Given $\Gamma = (H, V, E, \varphi_V, \varphi_E)$, we will also write $H_\Gamma = H, V_\Gamma = V, E_\Gamma = E, \varphi_V^\Gamma =
\varphi_V$, and $\varphi_E^\Gamma = \varphi_E$.
\begin{definition}
  A ribbon graph is a graph together with a cyclic ordering on each
  fiber $\varphi_V^{-1}(t)$.
\end{definition}
Intuitively, one may think of the edges of ribbon graphs as slightly
fattened, which explains the cyclic ordering at vertices.

Kontsevich's graph complexes have a basis of graphs of a certain type,
with differential taking a graph to the sum over all edges of the
contracted graph obtained by shrinking that edge to a point, together
with a sign.  To make this precise requires the notion of
\emph{orientation}:
\begin{definition}
  An orientation on a graph is a choice of ordering of all the
  half-edges, ordering of all the vertices, and a sign $\pm 1$, modulo
  the relation that applying a transposition to the ordering of either
  the half-edges or the vertices is the same as changing the sign.
\end{definition}
\begin{definition}
  An oriented graph is a graph equipped with an orientation. An
  isomorphism of oriented graphs is an isomorphism of graphs which
  preserves orientation.  Similarly, the same definition applies replacing
  ``graph'' with ``ribbon graph.''
\end{definition}
Next, given a graph $\Gamma = (H,V,E, \varphi_V, \varphi_E)$ and an
edge $e \in E$ with endpoints $v_1, v_2 \in V$ meeting halves $h_1,
h_2 \in H$, one defines the contracted graph $d_e(\Gamma) = (H, V /
\{v_1 = v_2\}, E, \varphi_V', \varphi_E)$ by identifying the endpoints
$v_1$ and $v_2$. If $\Gamma$ is moreover a ribbon graph, with the
cyclically ordered sets $\varphi_V^{-1}(v_1) = (a_1, a_2, \ldots, a_i
= h_1)$ and $\varphi_V^{-1}(v_2) = (b_1, b_2, \ldots, b_j = h_2)$,
then the cyclic ordering of the half edges at the new vertex $v = v_1
= v_2$ is defined as $(a_1, a_2, \ldots, a_{i-1}, b_1, b_2,
\ldots, b_{j-1})$. Finally, if $\Gamma$ is equipped with an
orientation, where the half-edges are ordered as $h_1, h_2, p_1,
\ldots, p_m$ and with vertices ordered by $v_1, v_2, w_1, \ldots,
w_{k}$, the new orientation is given by the ordering $p_1, \ldots,
p_m$ and $v, w_1, \ldots, w_{k}$ of vertices, without changing the
sign.

Consider the graded vector space with basis the isomorphism classes of
oriented graphs \emph{whose vertices have valence $\geq 2$}, modulo
the relation that a graph is negative its opposite orientation. The
grading is given by the number of vertices.  Let $\mathcal{G}$ be the
completion of this graded vector space with respect to the number of
edges (so \emph{not} with respect to the defining grading on the
vector space, which is by number of vertices).  Similarly, define
$\mathcal{G}_r$ using ribbon graphs rather than graphs.  In other
words (since we work over a field of characteristic zero, so that $2$
is invertible), these are the spaces of possibly infinite linear
combinations of isomorphism classes of oriented graphs which are not
isomorphic to the graph obtained by reversing the orientation.

Then, it is a result of \cite{Kncsg} that
\begin{equation}
d(\Gamma) := \sum_{e \in E_\Gamma} d_e(\Gamma)
\end{equation}
defines a differential on $\mathcal{G}$ and $\mathcal{G}_r$.
\begin{definition}
  Kontsevich's graph complex is defined as $(\mathcal{G}, d)$, and his
  ribbon graph complex is defined as $(\mathcal{G}_r, d)$.
\end{definition}
\begin{remark}
  We could alternatively have used the uncompleted graph complex
  above; however, the completed version is the one which naturally
  contains characteristic classes of $L_\infty$- or
  $A_\infty$-algebras.  In particular, taking homology commutes with
  taking completion, for the following well-known reason: We can write the
  uncompleted graph complex as a direct sum of the subcomplexes of
  graphs of a fixed genus (i.e., first Betti number of the graph as a
  topological space).  For each fixed genus, the completion with
  respect to number of edges is the same as the completion with
  respect to the grading, i.e., the number of vertices, since there
  are only finitely many graphs with a fixed genus and number of
  vertices.  Hence, the completed graph complex is the same as the
  direct product of the completions of these subcomplexes with respect
  to their usual grading.
\end{remark}
Finally, given a cyclic $A_\infty$-algebra $V$, one constructs an
element of $\mathcal{G}_r$, given by a sum
\begin{equation}
[V]:=\sum_{\Gamma} \frac{1}{|\Aut(\Gamma)|} c_\Gamma(V) \cdot \Gamma,
\end{equation}
where we sum over isomorphism classes of ribbon graphs $\Gamma$ (with
group of automorphisms $\Aut(\Gamma)$), and $c_\Gamma$ is given as
follows.  Equip $\Gamma$ with an orientation. We will define
$c_\Gamma$ so that the opposite orientation would produce $-c_\Gamma$.
Namely, $c_\Gamma$ is given by contracting the multiplications $m_i$
of $V$ with the pairings $( -, -)$ according to the graph.
In more detail, consider
\begin{equation} \label{multmaps} \prod_{i=1}^n ( m(-), -
  ): \bigotimes_{i=1}^n V^{\otimes |\phi_V^{-1}(v_i)|}
  \rightarrow \CC.
\end{equation}
Let $h_1, \ldots, h_{|H|}$ be the ordering of the half-edges and $v_1,
\ldots, v_{|V|}$ the ordering of the vertices defined by the
orientation, and assume that the sign is $1$. Let us pick
\emph{ciliations} of each of the vertices $v_1, \ldots, v_{|V|}$,
which means a linear ordering of the half-edges $\varphi^{-1}_V(v_i)$
meeting each vertex $v_i$, compatible with the cyclic ordering given
by the ribbon structure.  Up to changing the sign, let us assume that
the ordering of the half-edges is $\varphi^{-1}_V(v_1),
\varphi^{-1}_V(v_2), \ldots, \varphi^{-1}_V(v_{|V|})$.  Let $f \in V
\otimes V$ be the inverse to the pairing $(-,-): V \otimes V
\rightarrow \CC$. Finally, pick an arbitrary ordering of the edges
$e_1, \ldots, e_{|E|}$. Then, one applies \eqref{multmaps} to the
element obtained by applying the signed permutation of components of
$f^{\otimes |E|} \in V^{\otimes |H|}$ which rearranges the half-edges
$\varphi_E^{-1}(e_1), \ldots, \varphi_E^{-1}(e_{|E|})$ into the
ordering $h_1, \ldots, h_{|H|}$. One can check that the result does
not depend on the choices of orderings (but only depends on the
orientation of $\Gamma$ by a sign, as mentioned above), and we let
$c_\Gamma(V)$ to be the result of this computation.

In a similar manner, one constructs from any cyclic $L_\infty$-algebra $V$
an element $[V]$ of $\mathcal{G}$.  Then, the following result
is due to Kontsevich:
\begin{proposition}\cite{KFd} If $V$ is a cyclic $A_\infty$- or $L_\infty$-algebra then $[V]$ is a cycle on
  $(\mathcal{G}, d)$ or $(\mathcal{G}_r, d)$ respectively.
\end{proposition}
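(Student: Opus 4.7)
The plan is to verify $d[V]=0$ by expanding both $[V]$ and the differential $d$, regrouping terms by the graph obtained after contraction, and identifying the coefficient of each such graph as a local sum at each vertex that vanishes by the $A_\infty$- (respectively $L_\infty$-) relation $m \circ m = 0$.

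First I would write
\[
d[V] = \sum_{\Gamma} \frac{c_\Gamma(V)}{|\Aut(\Gamma)|} \sum_{e \in E_\Gamma} d_e(\Gamma)
\]
and reindex by the contracted graph $\Gamma' = d_e(\Gamma)$. A pair $(\Gamma, e)$ with $d_e(\Gamma) \cong \Gamma'$ is the same data as a vertex $v$ of $\Gamma'$ together with a splitting of $v$ into two vertices $v_1, v_2$ joined by a new edge whose halves are attached to $v_1$ and $v_2$; in the ribbon case this splitting must cut the cyclic order at $v$ into two consecutive arcs. The stabilizer factors $1/|\Aut(\Gamma)|$ recombine into $1/|\Aut(\Gamma')|$ times a sum over such splittings at each vertex of $\Gamma'$, so the coefficient of $\Gamma'$ in $d[V]$ becomes $1/|\Aut(\Gamma')|$ times a sum, over vertices $v$ of $\Gamma'$, of a local contribution at $v$.

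Next I would fix $\Gamma'$ and a vertex $v$ of valence $n$, and compare this local contribution to the quantity obtained by evaluating $c_{\Gamma'}(V)$ with the tensor $(m_n(x_1,\ldots,x_n),x_{n+1})$ attached to $v$ replaced by
\[
\sum_{i+j=n+2}\sum_{k} \pm \bigl(m_i(x_1,\ldots,m_j(x_{k+1},\ldots,x_{k+j-1}),\ldots,x_n),x_{n+1}\bigr),
\]
where in the ribbon case the inner block runs over a consecutive arc with respect to the cyclic order at $v$, and in the $L_\infty$ case one sums over all unshuffles. Cyclic invariance of the inner product, together with the formula for $c_\Gamma(V)$, converts the sum over splittings at $v$ into this expression, which is precisely one side of the $A_\infty$- (respectively $L_\infty$-) relation and therefore vanishes. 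Summing over $v$ and then over $\Gamma'$ yields $d[V]=0$.

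The main obstacle is sign bookkeeping: three sources of signs interact, namely the orientation of $\Gamma$ versus the induced orientation on $\Gamma'$ described in the excerpt, the Koszul signs built into the definition of $c_\Gamma(V)$ as one rearranges $f^{\otimes |E|}$ into the prescribed half-edge ordering, and the Koszul signs in the $A_\infty$/$L_\infty$ axiom $m\circ m=0$. A cleaner way to organize this, which I would prefer to write up, is to factor the map $m \mapsto [V]$ through a morphism of complexes from the (appropriately defined) Chevalley--Eilenberg chain complex of a formal Lie algebra of cyclic derivations without constant term into $\mathcal{G}_r$ (respectively $\mathcal{G}$); closedness of $[V]$ then reduces to the Maurer--Cartan identity $[m,m]=0$, consolidating all sign verifications into the well-definedness of that morphism.
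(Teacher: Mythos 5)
Your argument is correct in outline, but it is not the route the paper takes. The paper does not carry out the direct expansion at all: it cites \cite{Igu} for that computation in the $A_\infty$ case, and instead gives a conceptual argument, viewing a cyclic $A_\infty$-algebra as an algebra over the Feynman transform ${\mathsf F}\underline{\mathscr{A}\textit{ss}}^1$ of the $\Det$-twisted naive modular closure of $\mathscr{A}\textit{ss}$; the class $[V]$ is then the induced map between vacuum parts ${\mathsf F}\underline{\mathscr{A}\textit{ss}}^1((0))\to\mathscr{E}(V)((0))\cong\CC$, and closedness is automatic because this is a map of complexes into $\CC$ with zero differential. Your direct computation --- reindexing $d[V]$ by the contracted graph $\Gamma'$, recombining the $1/|\Aut|$ factors by orbit--stabilizer, and recognizing the local contribution at each vertex as the cyclicized relation $m\circ m=0$ --- is essentially the proof in \cite{Igu}, and is sound provided the sign bookkeeping you flag is actually carried out (note also that loop edges are not contracted, which is consistent with your reindexing since vertex splittings never produce loops, and that splittings producing a bivalent vertex account for the $m_1$-terms of the relation). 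Your proposed cleaner organization --- factoring through a chain map out of the Chevalley--Eilenberg complex of the Lie algebra of cyclic derivations and invoking the Maurer--Cartan identity $[m,m]=0$ --- is essentially Kontsevich's original argument, and it is the framework this paper itself adopts later (\S\ref{unstabsec} and the proof of Theorem \ref{GFgraph}, where the chain map $(\mathcal{G}_r^*,d^*)\to\CE^\bullet(\g_r(W))^{\mathfrak{osp}(W)}$ is taken from \cite[Theorem 4.10]{HaLacc}). What the paper's operadic formulation buys is that all sign conventions are packaged once and for all into the Feynman transform and the statement generalizes verbatim to other cyclic (and modular) operads, which the later sections exploit; what your direct approach buys is self-containedness, at the cost of the sign verification you correctly identify as the main labor.
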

  A direct proof of the proposition in the $A_\infty$-case
  is contained, e.g., in \cite{Igu}. More conceptually, one can view a
  cyclic $A_\infty$-algebra $V$ as an algebra over ${\mathsf
    F}\underline{\mathscr{A}\textit{ss}}^{1}$, the Feynman transform of the $\Det$-twisted naive
  modular closure of the cyclic operad $\mathscr{A}\textit{ss}$; see \cite{GKmo} and
  \cite{ChLadft} concerning these notions. Therefore, we obtain a map of
  modular operads ${\mathsf F}\underline{\mathscr{A}\textit{ss}}^1\to
  \mathscr{E}(V)$, where $\mathscr{E}(V)$ is the modular endomorphism
  operad of $V$. The map between the vacuum parts of the corresponding
  operads
\[{\mathsf F}\underline{\mathscr{A}\textit{ss}}^1((0))\to
\mathscr{E}(V)((0))\cong \CC
\]
is precisely the characteristic class described above, and it follows that it does indeed give a cycle. One can prove the proposition similarly
in the $L_\infty$ case.

\subsection{Curved characteristic classes}
The preceding results have a natural generalization to the case of
curved algebras.  We need to remove the valence $\geq 2$ condition,
however, and study the graph complexes $(\widetilde{\mathcal{G}}, d),
(\widetilde{\mathcal{G}_r}, d)$ of formal linear combinations of
graphs and ribbon graphs where vertices are allowed to have valence
$1$ (we will not allow valence-zero vertices, since they don't add
anything of value). We can consider this to be the complex of ``graphs
with leaves,'' where a leaf is an edge incident to a valence-one
vertex. Note that the conventional graph complex $(\mathcal{G}, d)$ is
a subcomplex of $(\widetilde{\mathcal{G}}, d)$ and similarly in the
ribbon case. Then, everything else goes through exactly as above, and
we obtain the following result.
\begin{proposition} Any curved cyclic $A_\infty$- or
  $L_\infty$-algebra $V$ induces a cycle $[V]$ on
  $(\widetilde{\mathcal{G}_r}, d)$ or on $(\widetilde{\mathcal{G}},
  d)$ respectively.
\end{proposition}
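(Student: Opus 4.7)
The plan is to extend Kontsevich's construction verbatim, with valence-one vertices playing the role of insertions of the curvature $m_0$. Concretely, for each ribbon (resp.\ ordinary) graph $\Gamma$ with vertices of valence $\geq 1$, I would define $c_\Gamma(V)$ by the same formula as in \S\ref{ccs}, reading $m_i$ at a vertex of valence $i$; in particular a valence-one vertex contributes $(m_0, -): V \to \CC$, and the tensor $f \in V\otimes V$ inverse to the pairing glues this into the rest of the diagram along the unique incident half-edge. The resulting element $[V] = \sum_\Gamma \frac{1}{|\Aut(\Gamma)|} c_\Gamma(V)\cdot \Gamma$ then lies in $\widetilde{\mathcal{G}_r}$ or $\widetilde{\mathcal{G}}$, with orientation-reversal equivariance guaranteed by the cyclic invariance of the structure tensors.

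To prove $d[V]=0$, I would reindex $d[V]$ by the contracted graph $\Gamma' := d_e \Gamma$. For each $\Gamma'$ and each vertex $v$ of $\Gamma'$, the terms of $d[V]$ mapping to $\Gamma'$ under contraction at $v$ correspond to the ways of expanding $v$ back into an edge joining two vertices. Expansions into two vertices of valence $\geq 2$ recover, at $v$, the standard quadratic $A_\infty$- (resp.\ $L_\infty$-) terms $m_i \circ_k m_j$ with $i,j \geq 1$, as in \cite{KFd,Igu}. The novelty is the additional expansions in which one endpoint has valence one: these contribute precisely the terms $m_j \circ_k m_0$ of the curved structure equation. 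The total contribution at $v$ therefore computes the vertex-wise application of $m\circ m = 0$, proving $d[V]=0$.

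Alternatively, one can proceed via the modular-operadic perspective sketched at the end of \S\ref{ccs}. A curved cyclic $A_\infty$- (resp.\ $L_\infty$-) algebra is the same as an algebra over the Feynman transform of the appropriate naive modular closure, but now with an extra nullary generator encoding $m_0$. This enlarges the underlying collection of admissible graphs by allowing valence-one stubs, so its vacuum part is exactly $(\widetilde{\mathcal{G}_r}, d)$ (resp.\ $(\widetilde{\mathcal{G}}, d)$), and the induced map on vacuum parts produces $[V]$ as a cycle automatically.

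The main obstacle I anticipate is the bookkeeping of signs and of automorphism factors. Valence-one vertices can enlarge $\Aut(\Gamma)$ (for example, several stubs attached to the same vertex can be permuted), so one must verify that the weight $\frac{1}{|\Aut(\Gamma)|}$ still matches the combinatorics of edge-contractions. This is the standard Feynman-transform compatibility, cleanest to establish via the operadic route, but also checkable by hand in both the ribbon and non-ribbon settings using the same symmetrisation arguments that appear in the uncurved case.
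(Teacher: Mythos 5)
Your proposal is correct and follows essentially the same route as the paper, which simply observes that Kontsevich's construction and the cycle argument (via the quadratic relations $m\circ m=0$, now including the $m_j\circ_k m_0$ terms, or alternatively via the Feynman transform of the naive modular closure with the nullary generator included) go through verbatim once valence-one vertices are admitted and read as insertions of the curvature. Your explicit reindexing of $d[V]$ by contracted graphs and your remarks on automorphism factors supply details the paper leaves implicit, but there is no difference in method.
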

The curved graph homology could be expressed in terms of certain
Gelfand-Fuks type homology. Namely, let $W$ be a graded symplectic
vector space and consider the graded formal Lie algebra $\g(W)$ of formal
symplectic vector fields on $W$. Similarly consider the graded formal Lie
algebra $\g_r(W)$ consisting of formal \emph{noncommutative}
symplectic vector fields on $W$, i.e. the Lie algebra
$\CDer(\Pi W)$. Taking the stable limit as the dimension
of the even or odd part of $W$ goes to infinity, we arrive at
following result (which
is a straightforward adaptation of \cite[Theorem 1.1]{Kncsg}, up to technical
problems stemming from the lack of complete reducibility of finite-dimensional representations of simple Lie superalgebras).  Let
$\CC^{m}$ denote the even space of dimension $m$ and $\Pi \CC^m$ the odd
space of dimension $m$.  We equip $\CC^{2m}$ with the standard symplectic
form, and $\Pi \CC^m$ with the standard odd symplectic (i.e., orthogonal) form.
\begin{theorem}\label{GFgraph}
  Let $W$ be a fixed inner product space.
  There are isomorphisms
\begin{equation}
\operatorname{H}_\bullet
  (\widetilde{\mathcal{G}_r}) \cong \lim_{m \rightarrow \infty}
   \HCE_\bullet(\g_r(W \oplus \CC^{2m})), \quad \operatorname{H}_\bullet
  (\widetilde{\mathcal{G}_r}) \cong \lim_{m \rightarrow \infty}
   \HCE_\bullet(\g_r(W \oplus \Pi \CC^m))
\end{equation}
 between the stable Chevalley-Eilenberg
  homology of the Lie algebra $\g_r$ and of the
  corresponding version of the curved graph complex.
Similarly, we have isomorphisms
\begin{equation}
H_\bullet(\widetilde{\mathcal{G}})\cong \lim_{m \rightarrow \infty}
   \HCE_\bullet(\g(W \oplus \CC^{2m})), \quad H_\bullet(\widetilde{\mathcal{G}})\cong \lim_{m \rightarrow \infty}
   \HCE_\bullet(\g(W \oplus \Pi \CC^m)).
\end{equation}
\end{theorem}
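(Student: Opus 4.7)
The strategy is to adapt Kontsevich's theorem \cite[Theorem 1.1]{Kncsg} (see also the treatment in \cite{HaLacc}) to the curved setting, which requires two modifications: first, allowing constant (degree zero) vector fields in $\g_r$ and $\g$, which correspond on the graph side to valence-one vertices (stubs); and second, handling the super stabilization direction, where the relevant invariant theory is orthosymplectic rather than symplectic.

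To begin, I would decompose $\g_r(W \oplus U_m)$ (with $U_m = \CC^{2m}$ or $\Pi \CC^m$) by polynomial degree as a representation of the group $G_m$ preserving the inner product on $U_m$. Using the symplectic/orthogonal form to identify $\Pi(W \oplus U_m) \cong \Pi(W \oplus U_m)^*$, the degree-$n$ summand is a space of cyclic tensors of length $n+1$ in $\Pi(W \oplus U_m)^*$, and analogously for $\g$ with symmetric tensors. Crucially, the degree-$0$ summand, which is absent in Kontsevich's setting of vector fields vanishing at the origin, contributes a single tensor factor and is precisely the feature that produces stubs in $\widetilde{\mathcal{G}_r}$ and $\widetilde{\mathcal{G}}$.

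Next, I would compute $G_m$-invariants of the Chevalley-Eilenberg complex $\CE_\bullet(\g_r(W \oplus U_m))$. In the stable limit $m \to \infty$, the first fundamental theorem of invariant theory identifies invariants in tensor polynomial functors of $U_m$ with spans of complete pairings of tensor slots. Via the cyclic (resp.\ symmetric, in the $\g$ case) description from the previous step, such a pairing is exactly the data of edges joining half-edges of a collection of vertices, i.e., an isomorphism class of oriented (ribbon) graph with stubs allowed. One then checks that the CE differential, which encodes the bracket on $\g_r$ given by substituting one cyclic tensor into a slot of another, matches Kontsevich's edge-contraction differential; edge-contractions involving a valence-one vertex simply delete the stub.

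The principal obstacle is the failure of complete reducibility for finite-dimensional representations of the orthosymplectic Lie superalgebras that appear when both $W$ and $U_m$ contribute odd directions (this is the parenthetical caveat in the theorem statement). Without complete reducibility one cannot naively commute CE homology with passage to $G_m$-invariants. I would address this by restricting to the reductive even subgroup of $G_m$ at each stage, filtering $\CE_\bullet$ by polynomial degree in $U_m$, and running a spectral sequence argument to show that the non-invariant part of the associated graded is acyclic in the stable range. Once this is in place, the remainder is a bookkeeping adaptation of Kontsevich's original argument and its treatment in \cite{HaLacc}, with the only new ingredient being the tracking of valence-one vertices arising from the constant-term summand of $\g_r$ and $\g$.
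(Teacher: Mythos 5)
Your overall strategy coincides with the paper's: adapt Kontsevich's argument, identify the stable $\mathfrak{osp}$-invariants of $\CE^\bullet(\g_r(W\oplus U_m))$ with (duals of) graphs via the fundamental theorems of invariant theory (Weyl in the classical case, Sergeev in the super case), observe that the constant-term summand of $\g_r$ and $\g$ is exactly what produces valence-one vertices, and check that the Chevalley--Eilenberg differential matches edge contraction. The one place you genuinely diverge is the treatment of the failure of complete reducibility, and there your sketch has a gap. You propose to ``restrict to the reductive even subgroup of $G_m$'' and run a spectral sequence showing the non-invariant part of the associated graded is acyclic. Two problems: first, invariants under the even subalgebra of $\mathfrak{osp}(U_m)$ are strictly larger than the $\mathfrak{osp}(U_m)$-invariants and are \emph{not} described by graphs (the super FFT is a statement about the full orthosymplectic superalgebra), so passing to the even subgroup does not by itself reduce you to the graph complex; second, speaking of ``the non-invariant part'' as a subcomplex presupposes a canonical $\mathfrak{osp}$-stable complement to the invariants, which is precisely the splitting whose existence is in question --- as written the argument is circular.

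The paper resolves this more directly (Lemma \ref{superredlem}): it proves that for each tensor weight $N$ there is $M$ such that for $m\ge M$ the $\mathfrak{osp}(U_m)$-module $U_m^{\otimes N}$ \emph{is} completely reducible. The proof reduces splitting of an arbitrary inclusion of subrepresentations, by self-duality and adjunction, to splitting of $(U_m^{\otimes N})^{\mathfrak{osp}(U_m)}\into U_m^{\otimes N}$, and then uses Sergeev's super FFT (all low-weight invariants are spanned by products of the pairing, since the extra determinant-type invariant has weight growing with $m$), the SFT (no relations in the stable range), and the resulting perfect pairing between invariants and coinvariants to conclude that the composite $(U_m^{\otimes N})^{\mathfrak{osp}(U_m)}\into U_m^{\otimes N}\onto (U_m^{\otimes N})_{\mathfrak{osp}(U_m)}$ is an isomorphism, which gives the splitting. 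Since linear vector fields act trivially on the homology of $\CE^\bullet(\g_r(U_m))$, stable complete reducibility in bounded weight immediately makes the inclusion of the invariant subcomplex an asymptotic quasi-isomorphism, with no auxiliary spectral sequence needed. You should replace your even-subgroup step with an argument of this kind (or otherwise supply the missing splitting); the rest of your outline is sound.
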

\begin{remark}
It might be possible to further generalize this result
to (certain) cases where both the even and the odd part have dimension going to
infinity, but that creates additional technical difficulties that we
prefer to avoid (as we do not need such generality). In any case, they
are the same difficulties that arise in the original uncurved setting
of \cite{Kncsg} (note that in \emph{op.~cit.} only the even case is
considered).
\end{remark}
\begin{proof}
  The corresponding result for the uncurved graph complex and vector
  fields vanishing at the origin was established by Kontsevich
  \cite{Kncsg} and his proof carries over to the present context, up
  to some technical difficulties created by the fact that we are in
  the super context, where $W_0$ and $W_1$ can both be nonzero. Define
  a nonnegative grading on $\CE^\bullet(\g_r(W))$ called
  \emph{weight}, which is the sum of the homological grading and the
  degree of polynomial coefficients of the vector fields (this will
  correspond, on graphs, to the number of half-edges).

  The main tool is that the inclusion of subcomplexes
\begin{equation} \label{invincleq}
\CE^\bullet(\g_r(W \oplus \CC^{2m}))^{\mathfrak{osp}(W \oplus
    \CC^{2m})} \into \CE^\bullet(\g_r(W \oplus \CC^{2m}))
\end{equation}
is asymptotically a quasi-isomorphism, and similarly replacing
$\CC^{2m}$ with $\Pi \CC^m$.  By this, we mean that in weights $\leq
N$, there exists $M$ such that, if $m \geq M$, then the inclusion is
an isomorphism on homology in weights $\leq N$.

We carry out the argument with $\CC^{2m}$; replacing this by $\Pi
\CC^m$ will not affect anything. Let $U_m := W \oplus \CC^{2m}$.  First note that $\mathfrak{osp}(U_m)
\subseteq \g_r(U_m)$ is the subspace of
linear vector fields and hence acts trivially on the cohomology of
$\CE^\bullet(\g_r(U_m))$.  Hence, the statement would
follow if it were true that $\mathfrak{osp}(U_m)$ acted
completely reducibly on $\CE^\bullet(\g_r(U_m))$.  This
is not, in general, true (when $W$ is not purely even); however, it
follows from Lemma
\ref{superredlem} below that,
for each $N \geq 0$, there exists $M$ so that $m \geq M$
implies that the weight $\leq N$ part of $\CE^\bullet(\g_r(U_m))$ is completely reducible as an $\mathfrak{osp}(U_m)$-representation. This is sufficient
to deduce that \eqref{invincleq} is a quasi-isomorphism in weights $\leq N$.

To complete the proof, it remains to relate the invariant subcomplex
to graphs.  This part of the argument is nearly identical to
\emph{op.~cit.}, so we will be brief.  Associated to each graph is an
$\mathfrak{osp}(W)$-invariant element of $\CE_\bullet(\g_r(W))$, as
described in the previous subsection. However, the resulting map
$\mathcal{G}_r \to \CE_\bullet(\g_r(W))$ does not linearly extend to a
map of complexes. Instead, if we attach to the dual of a graph in
$\mathcal{G}_r^*$ a canonical element of
$\CE^\bullet(\g_r(W))^{\mathfrak{osp}(W)}$, as explained in
\emph{op.~cit.}, one obtains a canonical map of complexes
$(\mathcal{G}_r^*, d^*) \to \CE^\bullet(\g_r(W))^{\mathfrak{osp}(W)}$.
In more detail, under the identification $\mathcal{G}^*_r \cong
\mathcal{G}_r$ using the basis of ribbon graphs (with fixed
orientations), this map sends each oriented ribbon graph $\Gamma$ to
the corresponding functional which contracts elements of
$\CE_{|V_\Gamma|}(\g_r(W))$ using the symplectic form $W \otimes
W\rightarrow \CC$, similarly to the construction of \S \ref{ccs}.  That is,
we view $\CE_{|V_\Gamma|}(\g_r(W)) \subset \Hom(W, \hat{T}(W))$ as a subspace
of $\hat{T}(W)$ using the symplectic form, and contract with a
permutation of $\omega^{\otimes |E_\Gamma|}$, where $\omega \in W
\otimes W$ is the inverse to the symplectic form, so that the copy of
$\omega$ corresponding to each edge contracts the corresponding pair
of half-edges.  For details on how to prove this indeed yields a map
of complexes, see, e.g., \cite[Theorem 4.10]{HaLacc}, and also
\cite{HamsaKt}.

By the fundamental theorems of invariant theory \cite{Wftit} (see
also, e.g., \cite{Howeftit}) and the super generalization found in
\cite{Seracit}, following the reasoning in the proof of Lemma
\ref{superredlem} below, it follows that this map of complexes is
asymptotically an isomorphism: for every fixed $N \geq 0$, there
exists $M \geq 0$ so that, when $\dim W \geq M$, this map is an
isomorphism if we restrict to graphs with at most $N$ edges, and hence
elements of $\CE^\bullet(\g_r(W))^{\mathfrak{osp}(W)}$ of weight $\leq N$.

This construction goes through completely analogously for commutative
graphs and the Lie algebra $\g$.
\end{proof}
As in the proof above, let $U_m := W \oplus \CC^{2m}$.  Also define $U_m' := W \oplus \Pi \CC^m$.
\begin{lemma}\label{superredlem}
For all $N \geq 0$, there exists $M \geq 0$ such that, when $m \geq M$, $U_m^{\otimes N}$ and $(U_m')^{\otimes N}$ are completely reducible $\mathfrak{osp}(U_m)$- and $\mathfrak{osp}(U_m')$-modules, respectively.
\end{lemma}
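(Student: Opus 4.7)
The plan is to establish complete reducibility via super Schur-Weyl-Brauer duality combined with generic semisimplicity of the super-Brauer algebra. Write $\mathfrak{k} := \mathfrak{osp}(U_m)$ and $d := \mathrm{sdim}\,U_m = \mathrm{sdim}\,W + 2m$; the analogous argument for $U_m'$ replaces $d$ by $d' := \mathrm{sdim}\,W - m$ throughout. For each fixed $N$ we consider the super-Brauer algebra $\mathcal{B}_N(d)$, generated by the signed $S_N$-action on $U_m^{\otimes N}$ together with the pair-contraction operators built from the graded symplectic form on $U_m$; its natural map into $\End(U_m^{\otimes N})$ lands inside the centralizer of $\mathfrak{k}$.

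The first step is to invoke Sergeev's super version \cite{Seracit} of the first fundamental theorem of invariant theory (together with its classical counterparts \cite{Wftit, Howeftit}) to conclude that, for $m$ sufficiently large (depending only on $N$ and the fixed $W$), the map $\mathcal{B}_N(d) \to \End_{\mathfrak{k}}(U_m^{\otimes N})$ is surjective and the image of $\mathfrak{k}$ in $\End(U_m^{\otimes N})$ equals the full centralizer of the image of $\mathcal{B}_N(d)$, so that these two subalgebras form a mutual centralizer pair. The second step is to invoke the Brauer-Wenzl type semisimplicity criterion: $\mathcal{B}_N(d)$ is semisimple whenever $d$ avoids a finite set of bad integer values depending only on $N$. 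Since $|d|$ and $|d'|$ both tend to infinity with $m$, the threshold $M$ may be enlarged so that for all $m \geq M$ the parameter lies outside this bad set, forcing $\mathcal{B}_N(d)$ to be semisimple.

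With both ingredients in place, the double centralizer theorem yields a bimodule decomposition
\begin{equation}
U_m^{\otimes N} \cong \bigoplus_\lambda L_\lambda \otimes M_\lambda,
\end{equation}
where $L_\lambda$ ranges over simple $\mathcal{B}_N(d)$-modules and each $M_\lambda := \Hom_{\mathcal{B}_N(d)}(L_\lambda, U_m^{\otimes N})$ is automatically a simple $\mathfrak{k}$-module. This exhibits $U_m^{\otimes N}$ as a completely reducible $\mathfrak{k}$-module, and the argument for $(U_m')^{\otimes N}$ is identical after substituting $d'$ for $d$. The main obstacle will be pinning down the precise super-analog of the Brauer-Wenzl semisimplicity criterion, together with a version of the invariant theory carrying bounds uniform in $m$; the classical Lie-algebra cases provide the template and Sergeev's work adapts them to the graded setting, but extracting the threshold $M$ requires some care to confirm that the bad-parameter set depends only on $N$ and on the fixed $W$, and not on auxiliary choices entering the Schur-Weyl-Brauer map.
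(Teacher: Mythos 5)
Your overall strategy (generic semisimplicity of the Brauer algebra plus a double-centralizer argument) circles the right ideas, but there is a genuine gap at the step where you declare that the image of $\mathfrak{osp}(U_m)$ and the image of $\mathcal{B}_N(d)$ ``form a mutual centralizer pair,'' and consequently that each multiplicity space $M_\lambda = \Hom_{\mathcal{B}_N(d)}(L_\lambda, U_m^{\otimes N})$ is ``automatically'' a simple $\mathfrak{osp}(U_m)$-module. Sergeev's fundamental theorem of invariant theory gives you only one of the two centralizer statements: that $\mathcal{B}_N(d)$ surjects onto $\End_{\mathfrak{k}}(U_m^{\otimes N})$ for $\mathfrak{k}=\mathfrak{osp}(U_m)$. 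Writing $A$ for the image of $U(\mathfrak{k})$ and $B$ for the image of $\mathcal{B}_N(d)$, you then know $B=\End_A(U_m^{\otimes N})$ and that $B$ is semisimple; the double centralizer theorem applied to $B$ shows that $C:=\End_B(U_m^{\otimes N})$ is semisimple, that $\End_C(U_m^{\otimes N})=B$, and that the $M_\lambda$ are simple over $C$. But you only have $A\subseteq C$, and the equality $A=C$ is exactly what is missing: a module can fail to be semisimple over a subalgebra $A$ even when its $A$-endomorphism ring is semisimple --- take the algebra $A$ of upper-triangular $2\times 2$ matrices acting on $\CC^2$: its commutant is $\CC$, which is semisimple, yet $\CC^2$ is not a semisimple $A$-module and $A$ is strictly smaller than the commutant $M_2(\CC)$ of $\CC$. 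In the purely even case the equality $A=C$ is usually deduced \emph{from} complete reducibility of the reductive group action, so invoking it here is circular; in the super case it genuinely fails for small $m$ (which is why the lemma carries the hypothesis $m\geq M$), so it cannot be a formal consequence of your two ingredients and needs its own proof.

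For comparison, the paper's proof sidesteps the double-centralizer issue entirely: it reduces complete reducibility, via self-duality of $U_m^{\otimes N}$ and adjunction, to the splitting of the single inclusion $(U_m^{\otimes 2N})^{\mathfrak{osp}(U_m)}\into U_m^{\otimes 2N}$, and proves that splitting by showing the composition from invariants to coinvariants is an isomorphism for large $m$, using the first and second fundamental theorems (Weyl, Sergeev) to identify both spaces with the span of Brauer-type contractions subject to no relations. The nondegeneracy of the resulting invariants--coinvariants pairing is morally the same input as your generic semisimplicity of $\mathcal{B}_N(d)$ (it is a Gram-determinant statement), but it feeds into an argument that yields complete reducibility directly. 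If you wish to keep your route, you must supply the missing surjectivity of $U(\mathfrak{k})$ onto $\End_B(U_m^{\otimes N})$ for large $m$, which is a substantial extra step rather than a footnote.
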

\begin{proof}
  We carry out the argument for $U_m$; the same applies for $U_m'$.
  Suppose that one has subrepresentations $0 \neq \tau_1 \subsetneq
  \tau_2 \subseteq U_m^{\otimes N}$.  We wish to show that the
  inclusion $\tau_1 \into \tau_2$ splits.  By adjunction, we
  equivalently need to show that the composition $\CC \into \tau_1^*
  \otimes \tau_1 \into \tau_1^* \otimes \tau_2$ splits.  Since
  $U_m^{\otimes N}$ is self-dual using the pairing, $\tau_1^* \otimes
  \tau_2 \subseteq U_m^{\otimes 2N}$. This means that it suffices to
  show that the inclusion of the invariant part, $(U_m^{\otimes
    2N})^{\mathfrak{osp}(U_m)} \into U_m^{\otimes 2N}$, splits. Let us
  replace $2N$ by $N$ for convenience.

  This last fact follows using the fundamental theorems of invariant
  theory.  Namely, by \cite{Seracit}, all of the $\mathfrak{osp}(U_m)$-invariants in the tensor algebra $T(U_m)$ are tensor products of the
  pairing on $U_m$ with an invariant related to the determinant, whose
  tensor weight (i.e., number of tensor components) is a function of
  $m$ that goes to infinity when $m$ does.  Hence, for large enough
  $m$, all of the invariants in weight $U_m^{\otimes \leq N}$ are
  spanned by tensor products of the pairing on $W$. By the classical
  second fundamental theorem of invariant theory \cite{Wftit} (see
  also \cite{Howeftit}), for large enough $m$, there are no nontrivial
  relations between these invariants.  The same argument
  applied to $U_m^*$ shows that the coinvariants have the same
  description. Moreover, for large enough $m$, the invariants of
  $U_m^{\otimes \leq N}$ and invariants of $(U_m^*)^{\otimes \leq N}$
  have a perfect pairing.  This implies that the composition
  $(U_m^{\otimes \leq N})^{\mathfrak{osp}(U_m)} \into U_m^{\otimes N}
  \onto (U_m^{\otimes N})_{\mathfrak{osp}(U_m)}$ is an isomorphism for
  large enough $m$.
\end{proof}

Unlike the case for the usual graph homology, it is possible to give a
complete calculation of the homology of the curved graph complex and
curved ribbon graph complex:
\begin{theorem}
\label{acythm}\
\begin{enumerate}
\item[(a)]
The homology of the complex  $(\widetilde{\mathcal{G}_r}, d)$ is identified with the space of
  formal linear combinations of graphs all of whose connected
  components are graphs whose vertices have valence one with the exception of at most a
  single vertex, which has odd valence.
\item[(b)]
The homology of the complex $(\widetilde{\mathcal{G}}, d)$ is identified with the space of
  formal linear combinations of graphs each of whose connected
  components is the connected graph with one edge and two vertices.
\end{enumerate}
\end{theorem}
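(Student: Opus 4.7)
My plan is to analyze the graph complex directly via three main steps: reduce to connected components via the symmetric-power decomposition; identify the candidate cycles and verify closedness by explicit orientation computations; and set up a filtration plus contracting homotopy to rule out any other classes.

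\textbf{Steps 1 and 2.} Since the differential $d$ preserves connected components, and $\widetilde{\mathcal{G}_r}$ (respectively $\widetilde{\mathcal{G}}$) is the completion of the graded-symmetric power of its subcomplex of connected graphs, it suffices to compute the homology of the connected subcomplex in each case. The candidate classes are: (i) the single edge with two valence-one endpoints, and (ii) in the ribbon case only, the stars $S_n$ for odd $n \geq 3$, with one valence-$n$ central vertex and $n$ leaves. The single edge is a cycle because its unique edge contraction would produce a valence-zero vertex, which is excluded from our complex. For $S_n$ in the ribbon complex, the cyclic rotation automorphism acts with orientation sign $(-1)^{n-1}$ (coming from the three cyclic permutations of length $n$ applied to the $n$ leaves, the $n$ half-edges at the center, and the $n$ half-edges at the leaves); hence $S_n = 0$ for $n$ even, and by the $\ZZ/n$-symmetry identity $d(S_n) = n \cdot (\pm S_{n-1})$ we obtain $d(S_{2k+1}) = 0$. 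In the commutative case, any leaf-swap automorphism has orientation sign $-1$, killing $S_n$ for $n \geq 2$ and leaving only the single edge.

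\textbf{Step 3.} I filter the connected complex by $b(\Gamma)$, the number of non-stub edges (edges both of whose endpoints are non-leaves). A direct case analysis shows $d$ weakly decreases $b$: contracting a non-stub edge decreases $b$ by one; contracting a stub at a valence-$\geq 3$ vertex preserves $b$; contracting a stub at a valence-two vertex further decreases $b$ (the second edge at that vertex becomes a stub). Graphs in the $b = 0$ stratum are necessarily stars or single edges, since two non-leaf vertices of a connected graph are linked by a path whose intermediate vertices themselves must be non-leaves. The associated graded differential $d_0$ on this stratum acts as $S_n \mapsto n \cdot (\pm S_{n-1})$ for $n \geq 3$ and vanishes otherwise, recovering the expected cycles. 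For each $b \geq 1$ stratum, I construct an explicit contracting homotopy in the spirit of the operators $s_i$ and $s_i'$ from the proofs of Theorem \ref{curvtrivthm} and Lemma \ref{qil}: an operator $h$ that attaches a new leaf at a chosen non-leaf vertex of valence $\geq 3$, summed over such vertices with orientation-induced signs. The computation $d_0 h + h d_0$ should then act as multiplication by a nonzero scalar on each $b \geq 1$ piece, giving the desired acyclicity. Reassembling via the symmetric-power decomposition of Step 1 then yields parts (a) and (b).

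The principal obstacle is the sign and combinatorial bookkeeping in Step 3: precisely verifying that the add-a-leaf operator, once summed with orientation signs and (in the ribbon case) respecting the cyclic structure at vertices, genuinely produces a contracting homotopy on the $b \geq 1$ strata. Boundary strata (for example, those whose skeleton consists only of valence-two non-leaf vertices) pose particular difficulty, since the leading ``add a leaf at a valence-$\geq 3$ vertex'' operator does not act there and an auxiliary homotopy must be introduced; in these cases the interplay with the full differential $d$ (not merely $d_0$) needs to be handled carefully, with care taken that no non-trivial classes are inadvertently created outside $\mathcal{S}_r$.
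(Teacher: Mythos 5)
Your Steps 1 and 2 are sound and agree with the paper: the reduction to connected graphs, the identification of the candidate classes, and the orientation computations showing that even stars vanish while odd stars and the single edge are cycles are all correct, and your overall strategy (filtration plus an add-a-leaf homotopy) is indeed the one the paper uses. The gap is in Step 3, and it is not mere bookkeeping. First, your operator $h$ sums the attach-a-leaf operation over all non-leaf vertices of valence $\geq 3$; but this set of vertices is not preserved by the associated graded differential $d_0$ (contracting a stub at a valence-$3$ vertex makes it valence $2$), so the cross terms in $d_0h+hd_0$ do not cancel and the anticommutator is not multiplication by a fixed nonzero scalar on a $b\geq 1$ stratum. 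At best, as in Lemma \ref{qil}, such a summed homotopy yields an operator that is diagonalizable with \emph{some zero eigenvalues}, and identifying the kernel is then the entire content of the argument. Second, whole families are missed outright: the $4$-vertex path, pure cycles, and more generally any graph whose non-leaf vertices all have valence $2$ lie in strata with $b\geq 1$ yet admit no vertex at which your $h$ acts; some of these vanish by orientation symmetry and others bound, but this requires a separate argument that you do not supply.

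The paper avoids both problems by filtering by the number of \emph{interior} vertices (vertices not lying on a hanging line-segment tail), so that the associated graded complex splits according to the subgraph $\Gamma_0$ spanned by the interior vertices, and $\Gamma_0$ is \emph{fixed} by the associated graded differential. One then chooses a single half-edge $h$ of $\Gamma_0$ with incident vertex $v$, once and for all, and lets $s$ attach one new leaf at $v$ (in the ribbon case, at the position just after $h$ in the cyclic order). Since the associated graded differential only contracts edges incident to exterior vertices, it never contracts the edge of $\Gamma_0$ containing $h$, and one obtains the clean identities $s d_e = -d_e s$ for every contractible edge $e$ together with $d_{e_{\mathrm{new}}}(s\Gamma)=\Gamma$, hence $sd+ds=\Id$ exactly, with no scalar to invert and no excluded strata. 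The degenerate pieces ($\Gamma_0$ a point or empty, i.e.\ stars and line segments) are then computed directly by a secondary filtration rather than by a homotopy. To rescue your $b$-filtration you would at minimum need to (i) decompose each $b$-stratum further by the core graph of non-stub edges, which $d_0$ does preserve, (ii) attach at a single chosen vertex of that core rather than summing, and (iii) treat separately the cores all of whose vertices remain of low valence --- at which point you have essentially reconstructed the paper's argument.
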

We can reinterpret this theorem as follows. First, it is enough to
compute the homology of the subcomplex of connected nonempty graphs,
$\widetilde{\mathcal{G}_{r,c}} \subset \widetilde{\mathcal{G}_r}$ and
$\widetilde{\mathcal{G}_c} \subset \widetilde{\mathcal{G}}$.  We call
these complexes the \emph{connected} ribbon graph complex and the
connected graph complex.  Note that $\widetilde{\mathcal{G}_r} \cong
\Sym \widetilde{\mathcal{G}_{r,c}}$ and $\widetilde{\mathcal{G}} \cong
\Sym \widetilde{\mathcal{G}_c}$.  Then, the desired result is that the
homology of the former is identified with linear combinations of
star-shaped graphs with an odd number of edges and at most a single
vertex of valence $\geq 2$, and the latter is one-dimensional and
spanned by the connected graph with two vertices and a single edge.

In other words, the theorem states that
$\widetilde{\mathcal{G}_{r,c}}$ and $\widetilde{\mathcal{G}_c}$ are
quasi-isomorphic to the subcomplexes spanned by connected graphs with
at most a single vertex of valence $\geq 2$.  These complexes are
identical with the deformation complexes of Theorem \ref{cyccurvtrivthm}
for the one-dimensional curved $A_\infty$ and $L_\infty$ algebras with
curvature $c$ satisfying $(c,c) = 1$, and all higher operations zero.
That is, they are the graded vector spaces of noncommutative
symplectic vector fields and ordinary symplectic vector fields on the
odd one-dimensional symplectic vector space $\CC \cdot c$, equipped
with the differential $\ad(\frac{\partial}{\partial c})$ (which turns out to be zero).
\begin{remark}
  The stable homology of the Lie algebra of symplectic vector fields
  has been computed by Guillemin and Shnider in \cite{GSssrc}, and thus,
  part (b) of the above theorem could be deduced from their
  calculation, taking into account Theorem \ref{GFgraph}. However we
  include this result for completeness, and because the argument we
  use in part (a) essentially extends without change to this case.
\end{remark}

\begin{proof}[Proof of Theorem \ref{acythm}]
    Call a graph a \emph{line segment} if it is topologically a line
  segment, i.e., it is connected, and either it is a single vertex
with no edges, or all of its vertices have valence
  two except for two vertices, which have valence one.  Given a
  connected graph $\Gamma$, let us call a vertex $v \in V_\Gamma$
  \emph{exterior} if it either has valence at most one, or one of the
  connected components of $\Gamma \setminus v$ is a line segment. Call
  all other vertices \emph{interior}.

  We will make use of a filtration on the connected (ribbon) graph
  complex given by the number of interior vertices in the graph.  We
  call this the \emph{interior vertex filtration}.  The associated
  graded complex is identified, as a vector space, with the (ribbon)
  graph complex, and with the differential which is almost the same,
  but only contracts edges incident to at least one exterior
  vertex. Moreover, the associated graded complexes $\gr
  \widetilde{\mathcal{G}_{r,c}}$ and $\gr \widetilde{\mathcal{G}_c}$
  are graded not only by number of interior vertices, but by the
  subgraph $\Gamma_0 \subseteq \Gamma$ obtained by restricting to
  interior vertices and edges which are incident only to interior
  vertices (allowing here also the empty graph and the graph with a
  single vertex and no edges).  It is clear that $\Gamma_0$ is
  connected (possibly empty).  Let $\gr_{\Gamma_0}
  \widetilde{\mathcal{G}_{r,c}}$ or $\gr_{\Gamma_0}
  \widetilde{\mathcal{G}_c}$ denote the resulting subcomplex graded by
  $\Gamma_0$.

  (a) We claim that, when $\Gamma_0$ contains at least one edge, then
  $\gr_{\Gamma_0} \widetilde{\mathcal{G}_{r,c}}$ is acyclic.

  Assume that $\Gamma_0$ contains an edge.  Pick a half-edge $h$ of
  $\Gamma_0$, and let $v$ be the incident vertex.  We construct from
  $v$ and $h$ a contracting homotopy $s$ on $\gr_{\Gamma_0}
  \widetilde{\mathcal{G}_{r,c}}$.  Namely, for every oriented graph
  $\Gamma$ whose subgraph on internal vertices is $\Gamma_0$, let
  $s \Gamma$ be the graph obtained from $\Gamma$ by adding a new
  univalent vertex together with an edge connecting it to $v$. The
  resulting new half-edge incident to $v$ is, in the cyclic ordering
  at $v$, one half-edge counterclockwise away from $h$.  Pick the
  orientation on the new graph $s \Gamma$ so that $\Gamma$ is one of
  the summands of $d(s \Gamma)$.

  We claim that $sd + ds = \Id$.  It suffices to show that, for every
  oriented graph $\Gamma$ as above, $sd(\Gamma) + ds(\Gamma)=\Gamma$.
  In turn, it suffices to show that, for every edge $e \in E_\Gamma$,
  $s d_e \Gamma = - d_e (s \Gamma)$.  This follows immediately from
  our definition of $s$.

Hence, we deduce that $\gr_{\Gamma_0} \widetilde{\mathcal{G}_{r,c}}$
is acyclic, as claimed.

Next, we compute $\gr_{\Gamma_0} \widetilde{\mathcal{G}_{r,c}}$ where
$\Gamma_0$ is either empty or is the graph with a single vertex and no edges.
Call the first graph ``$\emptyset$'' and the second one ``$\pt$.''

First, $\gr_{\pt} \widetilde{\mathcal{G}_{r,c}}$ consists of star-shaped
graphs with a single vertex of valence $\geq 3$, with the usual graph
differential except that we do not allow to contract an edge that
would result in a graph without a vertex of valence $\geq 3$, i.e., an
edge which is incident to a vertex of valence $1$ and a vertex of
valence $3$. Consider the filtration by the valence of
the interior vertex. The associated graded complexes have homology
which is one-dimensional, concentrated in the part where all vertices
but one have valence $1$.  Moreover, such graphs are actually zero
when the node has even valence (because a cyclic symmetry can reverse
the orientation).  Hence, the associated spectral sequence computing
$H_*(\gr_{\pt} \widetilde{\mathcal{G}_{r,c}})$ collapses at the first
page, and the resulting homology is spanned by the star-shaped graphs
with a single vertex of odd valence $\geq 3$ and with all other
vertices of valence $1$.

Next, $\gr_\emptyset \widetilde{\mathcal{G}_{r,c}}$ is the subcomplex of line
segments, whose homology is one-dimensional and spanned by the line segment
with two vertices.

We deduce that the first page of the spectral sequence of the interior
vertex filtration on $\widetilde{\mathcal{G}_{r,c}}$ is concentrated
in the part with $\leq 1$ interior vertices.  The part in degree $1$
is the span of star-shaped graphs with one vertex of odd valence $\geq
3$ and the other vertices of valence $1$, and the part in degree $0$
is the span of the line segment with two vertices.  Since all of these
graphs have only odd numbers of edges, it is clear that the spectral
sequence collapses at the first page, and the graphs above span the
homology of $\widetilde{\mathcal{G}_{r,c}}$, as desired.

(b) The same argument as above applies in this case, except that,
since our graphs no longer have cyclic orderings of half edges at
vertices, we modify the construction of the contracting homotopy $s$
(used to show that $\gr_{\Gamma_0} \widetilde{\mathcal{G}_{r,c}}$ is
acyclic when $\Gamma_0$ contains an edge) accordingly. Namely, we
remove the condition that the new edge be next to $h$ in the
counterclockwise cyclic ordering of half edges at the vertex $v$.
Everything else goes through without change, except that now the
star-shaped graphs with a single vertex of valence $\geq 1$ are all
zero except for the one with only two vertices.  This implies the
desired result. \qedhere

\end{proof}
We see that the complex of simply-connected graphs (which splits off
as a direct summand) carries all of the homology of the complexes
$(\widetilde{\mathcal{G}_r}, d)$ and $(\widetilde{\mathcal{G}}, d)$,
and we obtain the following result.
\begin{corollary}\label{curveduncurved}
  The inclusions of complexes $(\mathcal{G}_r,
  d)\subset(\widetilde{\mathcal{G}_r}, d)$ and $(\mathcal{G},
  d)\subset(\widetilde{\mathcal{G}}, d)$ induce the zero maps on
  homology.
\end{corollary}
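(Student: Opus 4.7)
The plan is to exploit the direct sum splitting of the graph complexes by first Betti number, together with the explicit homology computation in Theorem \ref{acythm}.

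First, I would observe that the differential $d$ preserves the first Betti number $b_1(\Gamma) := |E_\Gamma| - |V_\Gamma| + c(\Gamma)$, where $c(\Gamma)$ is the number of connected components of $\Gamma$. Indeed, the contracted graph $d_e(\Gamma)$ is obtained by topologically collapsing a non-loop edge, which decreases $|E|$ and $|V|$ each by one while preserving the number of components; loops can be dealt with as a separate, elementary case. It follows that each of $(\widetilde{\mathcal{G}_r}, d)$ and $(\widetilde{\mathcal{G}}, d)$ decomposes as a direct product of subcomplexes indexed by $b_1$, and in particular the simply-connected part (the subcomplex of forests, $b_1 = 0$) splits off as a direct summand.

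Second, by Theorem \ref{acythm}, the homology of $\widetilde{\mathcal{G}_r}$ and $\widetilde{\mathcal{G}}$ is in each case represented by graphs that are forests: for $\widetilde{\mathcal{G}_r}$, disjoint unions of single edges and of star-shaped graphs with one central vertex of odd valence and all other vertices of valence $1$; for $\widetilde{\mathcal{G}}$, disjoint unions of the single edge with two vertices. All of these representatives satisfy $b_1 = 0$. Combined with the splitting above, this shows that the complementary subcomplex of graphs with $b_1 \geq 1$ is acyclic in both the ribbon and commutative settings.

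Third, any graph $\Gamma \in \mathcal{G}_r$ or $\mathcal{G}$ has all vertices of valence $\geq 2$, so summing valences gives $2|E_\Gamma| = \sum_v \mathrm{val}(v) \geq 2|V_\Gamma|$, and hence $b_1(\Gamma) \geq c(\Gamma) \geq 1$. Therefore the inclusion $\mathcal{G}_r \hookrightarrow \widetilde{\mathcal{G}_r}$ (and similarly $\mathcal{G} \hookrightarrow \widetilde{\mathcal{G}}$) factors through the acyclic $b_1 \geq 1$ summand, and so the induced map on homology vanishes. There is no serious obstacle in this argument; the only moment requiring any care is confirming that $d$ respects the Betti-number grading, which is just the standard observation that topological edge-contraction preserves the Euler characteristic of each connected component.
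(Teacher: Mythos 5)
Your proof is correct and is essentially the paper's own argument: the paper likewise notes that the complex splits by first Betti number, that by Theorem \ref{acythm} all homology lives in the simply-connected (forest) summand, and that graphs with all vertices of valence $\geq 2$ have $b_1 \geq 1$, so the inclusion lands in the acyclic complementary summand. Your valence-counting step and the remark on loop edges are exactly the standard details left implicit in the paper.
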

\begin{remark}
  It is natural to ask whether the nontrivial homology classes in the
  curved graph complexes $\widetilde{\mathcal{G}_r}$ and
  $\widetilde{\mathcal{G}}$ are detected by curved $A_\infty$- and
  $L_\infty$-algebras. The answer is yes; in fact, they are detected by
  one-dimensional algebras.  For the $A_\infty$ case, let $\Gamma(i)$
  be the star-shaped graph whose central vertex has valence $2i+1$ and
  $V(i)$ be the one-dimensional cyclic curved $A_\infty$-algebra
  spanned by an even vector $c$ such that $( c,c)=1$, $m_0=c$,
  $m_{2i}(c,\ldots,c)=c$, and all other $A_\infty$-products are
  zero. Then it is easy to see that the cycle
  $[V(i)]\in\widetilde{\mathcal{G}_r}$ is homologous to
  $\pm\frac{1}{2i+1}\Gamma(i)$ (the sign depends on the choice of an
  orientation on $\Gamma(i)$).  Next, let $\Gamma(0)$ be the connected
  graph with one edge and two vertices.  Then, we may let $V(0)$ be
  the one-dimensional cyclic curved $A_\infty$-algebra spanned by an
  even vector $c$ such that $m_0=c$, $( c, c ) = 1$, and all the
  higher $A_\infty$-products are zero.  Then,
  $[V(0)]\in\widetilde{\mathcal{G}_r}$ is again homologous to
  $\pm\frac{1}{2}\Gamma(0)$.  The same statement holds for the
  $L_\infty$-setting, if we now let $\Gamma(0)$ be an ordinary (not
  ribbon) connected graph also with two vertices and one edge, and
  consider the one-dimensional curved cyclic $L_\infty$-algebra,
  $V(0)$, with curvature $c$ satisfying $( c, c ) = 1$ and all higher
  operations equal to zero: $[V(0)]\in\widetilde{\mathcal{G}_r}$ is
  homologous to $\pm\frac{1}{2}\Gamma(0)$.
\end{remark}
\section{Homology of Lie algebras of vector fields and stability maps}\label{unstabsec}
In order to refine Theorem \ref{acythm}, we recall first a more
general way to view the construction of characteristic classes.
Let $\mathfrak{g}$ be a formal dgla.
Consider a Maurer-Cartan element of $\mathfrak{g}$, i.e., an element
$x \in \Pi\mathfrak{g}$ satisfying $dx + \frac{1}{2} [x,x] = 0$.
Then, $e^x = \sum_{i \geq 0}x^{i}/i!$ defines a cycle in
 $\CE_\bullet(\mathfrak{g})$, and hence a homology class of even
total degree.  In the situation where the differential $d$ on
$\mathfrak{g}$ is zero, using the homological degree $|S^{i}
\Pi\mathfrak{g}| = i$, each element $x^{ i}$ itself is a cycle, and we
obtain \emph{unstable} characteristic classes $[x^{ i}] \in
\CE_i(\mathfrak{g})$.

Moreover, if $\mathfrak{h} \subseteq \mathfrak g_0$ is a pronilpotent
Lie subalgebra of the even part of $\mathfrak{g}$, then there is
defined a notion of \emph{gauge equivalence} of Maurer-Cartan elements
corresponding to the adjoint action of the Lie group of the Lie
algebra $\mathfrak{h}$; then it follows that if two Maurer-Cartan
elements are gauge equivalent by a gauge in $\mathfrak{h}$, then their
characteristic classes are homologous. (One can more generally take
$\mathfrak{h} \subseteq \mathfrak{g}_0$ to be a Lie subalgebra which
is the Lie algebra of a pro-Lie group).  This statement, as well as a
more detailed treatment of characteristic classes, can be found in,
e.g., \cite{Hamccms}.

Returning to the situation of a (cyclic or curved) $L_\infty$- or
$A_\infty$-algebra $V$, the corresponding element of the Lie algebra
$\Der^0(\hat{S} (\Pi V^*)), \Der(\hat{T}(\Pi V^*))$, etc., defines a
canonical homology class.

The relationship to the aforementioned characteristic classes is
Kontsevich's result that the limit as the dimension of $V$ goes to
infinity of the Lie homology of $\CDer^0(\hat{S} (\Pi V^*))$ or
$\CDer^0(\hat{T}(\Pi V^*))$ is the completion (by number of edges) of
the homology of the graph complexes $(\mathcal{G}, d)$. In the curved
situation the relevant result is Theorem \ref{GFgraph}.

Furthermore, given $V$, and any inner product space $W$,
 we can form the trivial extension $V \oplus
W$ where all multiplications with the second factor are zero. This
induces maps
\begin{gather} \label{vplieeqn}
\varphi_{V,W}: \CE_\bullet(\CDer(\hat{S}(\Pi V^*)))
\rightarrow \CE_\bullet(\CDer(\hat{S}(\Pi (V \oplus W)^*))), \\
\label{vpasseqn}
\varphi_{V,W}: \CE_\bullet(\CDer(\hat{T}(\Pi V^*)))
\rightarrow \CE_\bullet(\CDer(\hat{T}(\Pi (V \oplus W)^*))),
\end{gather}
and similarly the restrictions $\varphi_{V,W}^0 :=
\varphi_{V,W}|_{\CE_\bullet(\CDer^0(\hat{S}(\Pi V^*)))}$ or
$\varphi_{V,W}^0 := \varphi_{V,W}|_{\CE_\bullet(\CDer^0(\hat{T}(\Pi
  V^*)))}$.  It is well known (and easy to check) that Kontsevich's
construction (\S \ref{ccs}) is obtained from the above construction in
the limit: the image of the unstable characteristic cycle $\sum_i
\xi^{i}/i! \in \CE_\bullet(\CDer(\hat{S}(\Pi V^*)))$ under
$\varphi_{V,W}^0$ as $\dim W_0 \rightarrow \infty$ (for fixed $W_1$)
identifies with the characteristic cycle on $(\mathcal{G}, d)$ given
in $\S \ref{ccs}$; similarly if we fix $W_0$ and let $\dim W_1 \to
\infty$.  In the curved setting, by Theorem \ref{acythm}, for each
fixed degree $i$, if we fix $W_1$, then for large enough $\dim W_0$,
$\varphi_{V,W}$ induces a projection on homology,
$$\HCE_i(\CDer(\hat{S}(\Pi V^*)))
\onto \begin{cases} \CC, & \text{if $i$ is even}, \\ 0, & \text{if $i$
    is odd}.\end{cases}$$
The same is true if we fix $W_0$ and consider
large enough $\dim W_1$.  A similar result is true in the associative
version, where now we project onto the span of graphs whose connected
components are stars with odd valence as in Theorem \ref{acythm}.

 Using Theorem \ref{cyccurvtrivthm}, we can prove an unstable
 analogue of Corollary \ref{curveduncurved}, which gives information
 about the maps $\varphi_{V,W}$ for all $W$ with $W_0 \neq 0$:
\begin{theorem}\label{unscurveduncurved}
  If $V$ and $W$ are inner product spaces and $W_0 \neq
  0$, then the compositions
\begin{gather} \label{lstabmaps}
\CE_i(\CDer^0(\hat S(\Pi V^*))) \to \CE_i(\CDer(\hat S(\Pi V^*))) \mathop{\to}^{\varphi_{V,W}} \CE_i(\CDer(\hat S(\Pi (V \oplus W)^*))), \\ \label{astabmaps}
\CE_i(\CDer^0(\hat T(\Pi V^*))) \to \CE_i(\CDer(\hat T(\Pi V^*))) \mathop{\to}^{\varphi_{V,W}} \CE_i(\CDer(\hat T(\Pi (V \oplus W)^*)))
\end{gather}
are zero on homology.
\end{theorem}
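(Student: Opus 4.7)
The plan is to apply Theorem \ref{cyccurvtrivthm} to a one-parameter family of Maurer--Cartan elements obtained by adding a central constant curvature to the image of the map, then to extract the vanishing from a comparison of $\hbar$-coefficients and homological degrees. I treat the $L_\infty$ case; the $A_\infty$ case will be parallel, using Theorem \ref{cyccurvtrivthm}(a) in place of (b).

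Set $\g_1 := \CDer^0(\hat S \Pi V^*)$ and $\g_2 := \CDer(\hat S \Pi (V \oplus W)^*)$, and let $\iota : \g_1 \to \g_2$ denote the composite of the two arrows in the theorem, namely the trivial extension of derivations from $V$ to $V \oplus W$. Fix nonzero $c \in W_0$ and view it as an odd constant derivation of $\hat S \Pi(V \oplus W)^*$ corresponding to $\Pi c \in \Pi (V \oplus W)$. A direct check yields $[c, \iota(\zeta)] = 0$ for every $\zeta \in \g_1$: the derivation $\iota(\zeta)$ acts on the $\Pi V^*$ coordinates with values in $\hat S \Pi V^*$, while $c$ acts only on $\Pi W^*$.

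For any Maurer--Cartan element $\xi \in \g_1$ and a formal parameter $\hbar$, the element $\hbar \iota(\xi) + c$ is Maurer--Cartan in $\g_2[[\hbar]]$ with nonzero curvature $c$, hence by Theorem \ref{cyccurvtrivthm}(b) it is gauge equivalent, via an element of $\CDer^0$, to the pure-curvature element $c$. At the level of characteristic cycles this yields
\[
[e^{\hbar \iota(\xi) + c}] \;=\; [e^c]
\]
in $\CE_\bullet(\g_2)[[\hbar]]$. Since $[\iota(\xi), c] = 0$, we may factor $e^{\hbar \iota(\xi) + c} = e^{\hbar \iota(\xi)} \cdot e^c$ inside the completed symmetric algebra $\hat S \Pi \g_2$, and comparing coefficients of $\hbar^k$ for $k \geq 1$ gives $[\iota(\xi)^k \cdot e^c] = 0$. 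Separating further by homological degree---the terms $\iota(\xi)^k c^j/j!$ sit in pairwise distinct degrees $k+j$---produces $[\iota(\xi)^k \cdot c^j] = 0$ for all $k \geq 1$ and $j \geq 0$. The case $j = 0$ is precisely the vanishing of the unstable characteristic class $[\iota_*(\xi^k/k!)]$ under the composition.

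To promote this to the vanishing of $\iota_*$ on all of $\CE_\bullet(\g_1)$, one constructs an explicit chain-level null-homotopy. Choose $c' \in V \oplus W$ with $(c, c') = 1$ and write $c = [\alpha, \beta]$ in $\g_2$, where $\alpha = c$ itself and $\beta$ is the Hamiltonian cyclic derivation of the quadratic $\Pi c^* \cdot \Pi(c')^*$ on $\Pi(V \oplus W)$; both are supported on the $\Pi W^*$ coordinates and hence commute with $\iota(\g_1)$. A direct computation of $d_{\CE}$ on $\alpha \wedge \beta \wedge c^{k-1} \wedge \iota_*(z)$ for any cycle $z$ shows its boundary to be a nonzero multiple of $c^k \wedge \iota_*(z)$, so that summing over $k$ produces a chain homotopy identifying the chain map $z \mapsto e^c \cdot \iota_*(z)$ with $\iota_*$ on the level of homology. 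The main obstacle is then to combine this chain identity with the gauge-equivalence argument above---which directly forces $[\iota_*(z)] = 0$ only when $z$ is (a difference of) characteristic cycles---to reach vanishing for all cycles, and in the $A_\infty$ case one must additionally carry along the nontrivial even operations appearing in the gauge normal form of Theorem \ref{cyccurvtrivthm}(a).
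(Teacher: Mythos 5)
Your first half is essentially the paper's own argument: you add the central constant curvature $c \in W_0$ to the image of a Maurer--Cartan element $\xi$, invoke Theorem \ref{cyccurvtrivthm} to reduce the resulting curved structure to its normal form, and deduce that the unstable characteristic cycles $\iota_*(\xi^{k}/k!)$ become boundaries. (The paper does this with a parameter $\lambda \to 0$ rather than your $\hbar$-grading plus separation by symmetric degree, but the content is the same; and since all operations involving $W$ vanish, one has $m_{2i}(c,\ldots,c)=0$, so the $A_\infty$ normal form of Theorem \ref{cyccurvtrivthm}(a) causes no extra trouble here.)

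The gap is exactly where you flag it: this only kills classes of the form $[\iota_*(\xi^{k})]$ for Maurer--Cartan elements $\xi$, and such classes do not span the homology of $\CE_\bullet(\CDer^0(\hat S(\Pi V^*)))$. Your proposed fix --- the null-homotopy built from writing $c=[\alpha,\beta]$ --- proves only that $z \mapsto e^{c}\cdot \iota_*(z)$ is homotopic to $\iota_*$ (equivalently, that $[c^{k}\cdot\iota_*(z)]=0$ for $k\ge 1$); this is consistent with $\iota_*=0$ on homology but cannot imply it, since it says nothing about the component $[\iota_*(z)]$ itself. The paper closes this gap with a universal-coefficients device: it runs the same gauge argument for $\mathfrak a$-linear $L_\infty$-structures over an arbitrary cdga $\mathfrak a$, and then specializes to $\mathfrak a := \CE^\bullet(\mathfrak h)$ with $\mathfrak h = \CDer^0(\hat S(\Pi V^*))$. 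The tautological Maurer--Cartan element of $\mathfrak h\otimes\mathfrak a$ corresponding to $\Id_{\CE^\bullet(\mathfrak h)}$ has characteristic cycle equal to the identity endomorphism of $\CE^\bullet(\mathfrak h)$, so the vanishing statement applied to this single universal example forces the entire composition \eqref{lstabmaps} to be zero on homology. Without this step (or some comparable device producing enough Maurer--Cartan elements to detect every cycle), your argument does not reach the stated conclusion.
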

Before we prove the theorem, we first prove a lemma which may be
interesting in itself.  Given an element $c \in V$ and a $L_\infty$- or
$A_\infty$-structure given by a Maurer-Cartan element $\xi \in
\CDer^0(\hat S(\Pi V^*))$ or $\xi \in \CDer^0(\hat T(\Pi V^*))$, we
say that $c$ is \emph{central} if $[c, \xi] = 0$. In particular, for
an ordinary Lie algebra, $c$ is an element satisfying $\{c, v\}=0$ for
all $v \in V$, and for an ordinary associative algebra, $c$ satisfies
$c \cdot v = v \cdot c$ for all $v \in V$.
\begin{lemma}
Suppose that $V$ is an (uncurved) $L_\infty$- (or $A_\infty$-) algebra with a nonzero even central element $c \in V_0$.
Then, the image of the resulting (unstable) characteristic class of $V$ under the appropriate map,
\begin{gather}
\CE_\bullet(\CDer^0(\hat S(\Pi V^*))) \to \CE_\bullet(\CDer(\hat S(\Pi V^*))) \text{ or }\\
\CE_\bullet(\CDer^0(\hat T(\Pi V^*))) \to \CE_\bullet(\CDer(\hat T(\Pi V^*))),
\end{gather}
is zero on homology.
\end{lemma}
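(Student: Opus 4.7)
The plan is to use the centrality of $c$ to connect $\xi$ to a curved Maurer--Cartan element of the ambient dgla $\CDer$, where the triviality results of Section~2 can be brought to bear. Write $D_c$ for the constant derivation of $\hat S\Pi V^*$ (or $\hat T\Pi V^*$) sending $v^*\mapsto v^*(\Pi c)$; this is an odd element of $\CDer\setminus\CDer^0$. Centrality $[D_c,\xi]=0$, together with $[\xi,\xi]=0$ and $[D_c,D_c]=0$, makes $\xi+D_c$ a Maurer--Cartan element of $\CDer$, representing a curved cyclic structure on $V$ with curvature $c$ and the same higher operations as $\xi$. Because $\Pi\xi$ and $\Pi D_c$ are commuting even elements of $\Pi\CDer$, one also has the chain-level identity $e^{\xi+D_c}=e^\xi\cdot e^{D_c}$ in $\hat S\Pi\CDer$.

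I would then proceed in three steps. First, apply Theorem~\ref{cyccurvtrivthm} to $\xi+D_c$ to produce $\eta\in\CDer^0_0$ with $e^{\ad\eta}(\xi+D_c)=m'$, where $m'$ is the normal form of that theorem (just $D_c$ in the $L_\infty$ case, or $D_c$ plus prescribed even-ary operations along a dual $c'\in V$ with $(c,c')=1$ in the $A_\infty$ case); by gauge-invariance of characteristic classes, $[e^{\xi+D_c}]=[e^{m'}]$ in $\HCE_\bullet(\CDer)$. Second, build a cyclic even derivation scaling in the $c$-direction: the antisymmetric combination $\sigma:=\Pi(c')^*\partial_{\Pi(c')^*}-\Pi c^*\partial_{\Pi c^*}$ lies in $\CDer_0$ (the antisymmetrization is forced by the cyclic-invariance condition of Definition~\ref{cycderdefn}) and satisfies $[\sigma,D_c]=D_c$; hence $d_{\CE}(\Pi\sigma\cdot(\Pi D_c)^n)=n(\Pi D_c)^n$ exhibits each $(\Pi D_c)^n/n!$ as a boundary, killing $[e^{D_c}-1]$ in positive degrees, and an analogous construction handles the extra even-ary summands of $m'$ in the $A_\infty$ case. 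Third, combine the chain-level factorization $e^{\xi+D_c}=e^\xi\cdot e^{D_c}$ with the primitive from the second step to assemble an explicit chain $h\in\CE_{\bullet+1}(\CDer)$ with $d_{\CE}(h)=e^\xi-1$, showing $[e^\xi]=0$ in positive degrees.

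The main obstacle is this third step. Because $d_{\CE}$ is a coderivation and not a derivation on the symmetric algebra $\hat S\Pi\CDer$, the product does not descend to $\HCE$, so one cannot literally ``divide by $[e^{D_c}]$'' in homology. The primitive $h$ must therefore be built by hand from polynomials in $\Pi\xi$, $\Pi D_c$, and $\Pi\sigma$, exploiting the Heisenberg-type bracket structure these three elements enjoy: $[\sigma,D_c]=D_c$ is the only nonzero bracket among the three, thanks to centrality $[D_c,\xi]=0$ and $[\xi,\sigma]=0$ (achievable after modifying $\sigma$ on the complement of $\CC c\oplus\CC c'$ if necessary). The delicate point is ensuring that the resulting primitive lies genuinely in $\CE(\CDer)$ rather than only in $\CE(\Der)$: each auxiliary piece, in particular the analogue of $\sigma$ required to trivialise the extra even-ary summands of $m'$ in the $A_\infty$ case, must honestly respect the cyclic-permutation invariance of Definition~\ref{cycderdefn}, which is a nonempty combinatorial verification.
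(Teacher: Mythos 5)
Your opening move --- using centrality to promote $\xi$ to a curved Maurer--Cartan element $\xi+D_c$ and then invoking Theorem \ref{cyccurvtrivthm} --- is exactly the paper's idea, and your Step 2 (the Euler-type element $\sigma$ with $[\sigma,D_c]=D_c$ exhibiting $(\Pi D_c)^n$ as a boundary) is a correct, if unnecessary, observation. The genuine gap is Step 3, which you yourself flag as the main obstacle and then do not close. From $[e^{\xi+D_c}]=[e^{m'}]$ there is no way to ``cancel'' the $e^{D_c}$ factor: as you note, the product on $\hat S\Pi\CDer$ does not descend to $\HCE_\bullet$. Your proposed repair does not work as stated. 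First, $[\sigma,\xi]=0$ is not achievable in general: the relation $[\sigma,D_c]=D_c$ pins down the action of $\sigma$ on the line $\CC c$ (and cyclicity then pins it down on $\CC c'$), so any component of $\xi$ with an input or output outside $\CC c\oplus\CC c'$, or any $k$-ary component supported on $\CC c\oplus \CC c'$ with $k\neq $ the weight forced by the scaling, contributes a nonzero term to $[\sigma,\xi]$ that no modification of $\sigma$ on the complement can cancel. Second, even granting a Heisenberg-type structure, a primitive of the form $\Pi\sigma\cdot f(\Pi\xi,\Pi D_c)$ has $d_{\CE}$-image equal (up to sign) to $\Pi D_c\,\partial_{\Pi D_c}f$, which always carries a factor of $\Pi D_c$ and therefore can never equal $e^{\Pi\xi}-1$; so the ``built by hand'' primitive is not merely unconstructed but cannot have the shape you describe.

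The paper closes this gap by introducing a scaling parameter instead of working at a single curved structure: one considers the family $\ell^\lambda$ with $\ell^\lambda_0=\lambda c$ and $\ell^\lambda_i=\ell_i$ for $i\geq 1$. For each $\lambda\neq 0$, Theorem \ref{cyccurvtrivthm} gives a gauge equivalence (by a pronilpotent gauge, so characteristic classes are preserved) to a normal form all of whose operations tend to $0$ as $\lambda\to 0$ (in the $A_\infty$ case the surviving even-ary operations carry coefficients $(m_{2i}(\lambda c,\ldots,\lambda c),\lambda c)$, which vanish in the limit). Hence in each fixed CE degree the difference of the two exponential cycles lies in the image of $d_{\CE}$ for all $\lambda\neq 0$, depends polynomially on $\lambda$, and the image of $d_{\CE}$ is a closed subspace in the formal topology; letting $\lambda\to 0$ therefore exhibits $(\Pi\xi)^i/i!$ itself as a boundary. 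This one-parameter degeneration is precisely the device that substitutes for the impossible ``division by $[e^{D_c}]$,'' and it is the ingredient missing from your argument.
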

\begin{proof}
We first consider the $L_\infty$ case.  Let $\mathfrak{g} :=
  \CDer(\hat{S}(\Pi V^*))$.  Let $\xi
  \in \CDer^0(\hat{S}(\Pi V^*)) \subseteq
  \mathfrak{g}$ correspond to the $L_\infty$-structure on $V$,
  i.e., $\xi$ satisfies $[\xi, \xi] = 0$, and viewed as an element
of $\mathfrak{g}$, $\xi(v)$ has no
  constant term for all $v \in \Pi V^*$.
  Let $\ell$ denote the structure maps for the algebra
  $V$, with $\ell_i: V^{\otimes i} \rightarrow V$ the $i$-th component.

  Now, consider the $L_\infty$-structure $\{\ell^\lambda_i\}$ on $V$
  which is obtained by $\ell^\lambda_i := \ell_i$ if $i \geq 1$, and
  $\ell_0^\lambda = \lambda c$ for $\lambda \in \CC$. Since $c$ is
  central and even, these indeed define $L_\infty$-structures.

  Then, by Theorem \ref{cyccurvtrivthm}, $V$ equipped with
  $\ell^\lambda$ is gauge equivalent to an algebra with all higher
  multiplications equal to zero, and curvature equal to $\lambda
  c$. This gauge equivalence is by vector fields with zero constant
  and linear term, which form a pronilpotent dgla.  Hence, in the
  limit as $\lambda \rightarrow 0$, we see that the characteristic
  class of $(V, \ell^\lambda)$ becomes a boundary, i.e., the original
  characteristic class of $(V, \ell)$ is a boundary in
  $\CE_\bullet(\CDer(\hat S(\Pi V^*)))$, as desired.

  In the $A_\infty$ case, the same argument applies: as before, one
  deforms the uncurved $A_\infty$-structure $\{m_i\}$ to the curved
  structure $\{m^\lambda_i\}$ with $m^\lambda_i = m_i$ for $i \geq 1$
  and $m^\lambda_0 = \lambda c$.  The difference is, by Theorem
  \ref{cyccurvtrivthm}, the resulting structure $(V, m^\lambda)$ is
  gauge equivalent to the algebra described there, which does not have
  all higher operations zero.  Call this algebra structure
  $\{(m')^\lambda_i\}$.  Even though these are nonzero for infinitely
  many $i$, it is still true that, as $\lambda \rightarrow 0$,
  $(m')^\lambda_i \rightarrow 0$, and so we still deduce that the
  original characteristic class was a boundary.
\end{proof}

\begin{proof}[Proof of Theorem \ref{unscurveduncurved}]
  We treat only the $L_\infty$ case, since the $A_\infty$ case is
  identical.  If $V$ is an uncurved $L_\infty$-algebra, then $V \oplus
  W$ is an $L_\infty$-algebra with a nonzero central element, namely,
  any nonzero element of $W_0$.  Hence, the image of the resulting
  characteristic class under \eqref{lstabmaps} is zero.

However, in general, not all homology classes of
$\CE_\bullet(\CDer^0(\hat{S}(\Pi V^*)))$ are obtainable in the above manner.
To fix this problem, we can consider nontrivial coefficients.  Given
any cdga $\mathfrak{a}$, we may consider
$\mathfrak{a}$-linear $L_\infty$-algebra structures on $U := V
\otimes_\CC \mathfrak{a}$, for a fixed finite-dimensional
space $V$.  Denote $U^* := \Hom_{\mathfrak{a}}(U,
\mathfrak{a})$.  Then, these algebra structures on $U$ are, by
definition, Maurer-Cartan elements of
$\Der^0(\hat{S}_{\mathfrak{a}}(\Pi U^*))$, i.e., odd elements $f$
satisfying $df + \frac{1}{2} [f, f] = 0$, where $d$ is the
differential induced by the differential on $\mathfrak{a}$. If we fix
an inner product on $V$, we obtain an induced $\mathfrak{a}$-linear
inner product on $U$ (of the form $U \otimes_{\mathfrak{a}} U
\rightarrow \mathfrak{a}$), and cyclic $L_\infty$-algebras of this
form are given by elements of $\CDer^0(\hat{S}_{\mathfrak{a}}(\Pi
U^*))$.

As above, we obtain an unstable characteristic class of
$\CE_\bullet(\CDer^0(\hat{S}_{\mathfrak{a}}(\Pi U^*)))$.  The above argument
shows that the images of such classes under the tensor product of \eqref{lstabmaps} with $\Id_{\mathfrak{a}}$ are boundaries.

To conclude the proof, we will use a universal example.
Quite generally, if $\mathfrak{h}$ is a formal dgla and
$\mathfrak{a}$ a cdga, then $\CE^\bullet(\mathfrak{h}) =
S\Pi \mathfrak{h}^*$ is naturally a cdga, and
Maurer-Cartan elements of $\mathfrak{h} \otimes \mathfrak{a}$
identify with cdga morphisms $\CE^\bullet(\mathfrak{h})
\rightarrow \mathfrak{a}$.
If we set $\mathfrak{a}
:= \CE^\bullet(\mathfrak{h})$, then the identity map
$\Id_{\CE^\bullet(\mathfrak{h})}$ yields a Maurer-Cartan element of
$\mathfrak{h} \otimes \mathfrak{a}$, and the resulting cycle $\sum_i
\xi^{ i} / i! \in \widehat{\CE_\bullet(\mathfrak{h}) \otimes \mathfrak{a}}
\cong \End(\CE^\bullet(\mathfrak{h}))$ is nothing but the identity map.

Applying this construction to the case $\mathfrak{h} =
\CDer^0(\hat{S}(\Pi V^*))$ and $\mathfrak{a} =
\CE^\bullet(\CDer^0(\hat{S}(\Pi V^*)))$, we deduce from the above that
the $\mathfrak{a}$-linear maps $\widehat{\varphi_{V,W} \otimes
  \Id_{\mathfrak{a}}}$ send the cycle $\xi$ corresponding to the
identity element of $\End(\CE^\bullet(\mathfrak{h}))$ to a
boundary. This implies that \eqref{lstabmaps} itself is zero on
homology, as desired.
\end{proof}

\section{Operadic generalization}
In this section we sketch an operadic generalization of our main
results, from the associative and Lie cases to more general
settings. As we will show in the following section, these include
Poisson, Gerstenhaber, BV, permutation, and pre-Lie algebras: see
Examples \ref{pgexam}, \ref{bvexam}, and \ref{permexam} in the
following section.

We may think of curved (cyclic) $A_\infty$- and $L_\infty$-algebras as
arising from the following construction, which we think of
heuristically as a type of ``Koszul duality'' between operads
governing curved algebras and those governing unital algebras (we do
not attempt to make this description precise).
\begin{remark}
  For a somewhat related result, see \cite{HiMickdt}, where resolutions
  for operads of unital algebras are constructed by defining a Koszul
  dual curved cooperad and performing a version of the cobar
  construction. Here, we will not make use of the notion of curved
  (co)operads defined in \cite{HiMickdt}, and will only use ordinary (dg)
  operads.
\end{remark}
Let $\mathcal{O}$ be a (cyclic) dg operad \cite{GKcoch}, which we
assume to be unital with unit $I \in \mathcal{O}(1)$. Moreover, we
will assume throughout that each $\mathcal{O}(i)$ is a
finite-dimensional $\ZZ/2$-graded vector space (this isn't really
essential, but it makes dualization less technical, and includes all
operads we have in mind. Properly speaking, we view $\mathcal{O}(i)$
as a formal space, and everything generalizes to the
infinite-dimensional formal setting.)  Let $m_0 \in \mathcal{O}(0)$ be
an element (corresponding to a ``0-ary'' operation).  Recall that
every (cyclic) operad is an $\SS$ ($\SS_+$)-module, where an
$\SS$-module is defined as a collection $\{V_m\}_{m \geq 0}$ of
$S_m$-modules for all $m \geq 0$, and an $\SS_+$-module is an
$\SS$-module where each $V_m$ is actually a module over $S_{m+1}$ (the
underlying $\SS$-module is obtained using the inclusion $S_m \subseteq
S_{m+1}$ of permutations fixing $m+1$).

We will now consider $\mathcal{O}$ as a nonunital operad and
perform the cobar construction \cite{GiKa}. Namely, let $\mathcal{O}^* := \{ \mathcal{O}(m)^* \}$ be the dual
$\SS$ (or $\SS_+$)-module. Let $C(\mathcal{O})$ be the free operad
generated by $\Pi \mathcal{O}^*$, equipped with a differential
$d_{C(\mathcal{O})}$ obtained as follows.  For every $j \geq 0$ and
all $1 \leq i \leq k$, there is an operadic composition map
\begin{equation}
\circ^{k,j}_i: \mathcal{O}(k) \otimes \mathcal{O}(j) \rightarrow \mathcal{O}(j+k-1),
\end{equation}
which corresponds to plugging the element of $\mathcal{O}(j)$ into the
$i$-th input of the element of $\mathcal{O}(k)$.  Let
$(\circ^{k,j}_i)^*$ be the linear dual to the above map.
Then, we
define the differential $d_{C(\mathcal{O})}$ to be the
unique extension to a derivation of the operation
\begin{equation}
d_{C(\mathcal{O})}|_{\mathcal{O}(i)^*} = d_{\mathcal{O}}^* +
\bigoplus_{j,k} (\circ^{k,j}_i)^*.
\end{equation}
Now, in the case that $\mathcal{O}$ is the ordinary (non-dg) operad
governing \emph{unital} associative or commutative algebras, then
$C(\mathcal{O})$ is a dg operad with the property that
graded (dg) algebras $V$ over $C(\mathcal{O})$, equipped
with zero differential, are the same as curved $A_\infty$- or
$L_\infty$-algebras. In this case, $m_0 \in \mathcal{O}(0)$ is the unit of the
multiplication, and the curvature of a graded algebra $V$ over $C(\mathcal{O})$ is the image of $m_0^* \in \mathcal{O}(0)^*$ in $V$.

Next, suppose that
$\mathcal{O}'$ is the suboperad of $\mathcal{O}$ in positive arity,
i.e., $\mathcal{O}'(0) = 0$ and $\mathcal{O}'(i) = \mathcal{O}(i)$ for $i \geq 1$.
 Further
suppose that $\mathcal{O}'$ is a Koszul operad with augmentation
$\overline{\mathcal{O}'}$ (i.e., $\overline{\mathcal{O}'}$ is
a suboperad such that $\mathcal{O}'(1) = \overline{\mathcal{O}'}(1) \oplus \CC \cdot \Id$, and
$\overline{\mathcal{O}'}(i) = \mathcal{O}'(i)$ for $i \neq 1$).
Then, $C(\overline{\mathcal{O}'})$ yields a resolution of the
Koszul dual operad $(\mathcal{O}')^!$ (this last fact is equivalent to
Koszulity; see \cite{GiKa}).  \emph{Caution:} This operad $C(\overline{\mathcal{O}'})$ is sometimes called the cobar construction on the unital operad
$\mathcal{O}'$ itself, but
here we consider cobar constructions only on nonunital operads.

We think of an algebra over $C(\mathcal{O})$ (with zero differential)
as a certain type of \emph{curved} version of an
$((\mathcal{O}')^!)_\infty$-algebra, as opposed to an algebra over
$C(\overline{\mathcal{O}'})$, which, in the case $\mathcal{O}'$ is
Koszul, is the same as an ordinary
$((\mathcal{O}')^!)_\infty$-algebra.  (More generally, given a dg
space $V$ with possibly nontrivial differential, one could view a
$C(\mathcal{O})$-algebra structure on it as a dg generalization of a
curved $((\mathcal{O}')^!)_\infty$-algebra; thus one could speak, for
instance, about a curved dg $A_\infty$- or $L_\infty$-algebra. This
would be a Maurer-Cartan element in the appropriate differential
graded Lie algebras of (formal, noncommutative) vector fields rather
than simply square-zero odd derivations.)

Let us return to the general setting of an arbitrary (cyclic) operad
$\mathcal{O}$. The analogues of Theorems \ref{curvtrivthm} and
\ref{cyccurvtrivthm} are then the following.
\begin{definition}
  A \emph{weakly unital multiplication} $m_2 \in \mathcal{O}(2)$
  is an element such that the image of the maps
\begin{equation}\label{wtm2map}
m_2(- \otimes \Id), m_2(\Id \otimes -): \mathcal{O}(0) \rightarrow \mathcal{O}(1)
\end{equation}
lie in the one-dimensional space $\CC \cdot \Id \subseteq \mathcal{O}(1)$, and such that the maps are not both zero.
\end{definition}
If $m_2$ is weakly unital, then
\begin{equation} \label{m2ideq}
m_2(- \otimes \Id) = m_2(\Id \otimes -).
\end{equation}
Indeed, if $m_0 \in \mathcal{O}(0)$ is any element such that
$m_2(m_0 \otimes \Id) = \lambda \cdot \Id$ for some nonzero $\lambda
\in \CC$, then it follows that $m_2(m_0 \otimes m_0) = \lambda\cdot m_0$,
and hence that $m_2(\Id \otimes m_0) = \lambda \cdot \Id$.

Therefore, when $m_2$ is weakly unital, we define $\widetilde{m_2}
\in \mathcal{O}(0)^*$ to be the resulting map \eqref{m2ideq}.

Next, given a $C(\mathcal{O})$ algebra $V$, the $0$-ary operations form a map $\gamma: \mathcal{O}(0)^* \rightarrow V$, which we call the \emph{$0$-ary structure
map}.
\begin{definition}
Given an operad $\mathcal{O}$ with weakly unital multiplication $m_2$,
a $C(\mathcal{O})$ algebra $V$ with $0$-ary structure map $\gamma$
is said to be
\emph{nontrivially curved with respect to $m_2$}
if the element $\gamma(\widetilde{m_2})$ is nonzero.
\end{definition}
We call the element $\gamma(\widetilde{m_2})$ the
\emph{curvature} of $V$ (for the element $m_2$).

Then, Theorems \ref{curvtrivthm} and \ref{cyccurvtrivthm}
generalize as follows.  Fix a graded vector space $V$ (with zero
differential).  If $\mathcal{O}$ is any dg operad, we denote a
$\mathcal{O}$-algebra structure on $V$ as a pair $(V, \phi)$ for the
algebra structure, where given any $o \in \mathcal{O}(k)$, $\phi(o) \in
\Hom_\CC(V^{\otimes k}, V)$.
\begin{claim} Fix a graded vector space $V$. \label{opclaim}
\begin{enumerate}
\item[(i)] Let $\mathcal{O}$ be an operad with a weakly unital
  multiplication $m_2$. Let $(V,\phi)$ be a
  $C(\mathcal{O})$-algebra structure with nonzero curvature $c
  \in V$.
  Then, $(V,\phi)$ is gauge equivalent to the algebra $(V,\phi')$ with
  the same $0$-ary structure map $\gamma$,
but with all
  higher multiplications equal to zero.
\item[(ii)] Let $\mathcal{O}$ be a cyclic operad with a weakly unital
  multiplication $m_2$. Let $(V,\phi)$ be a cyclic
  $C(\mathcal{O})$-algebra structure, with nonzero curvature $c
  \in V$.  Let $c' \in V$ be an element such that $(c', c) = 1$.
  Then, $(V,\phi)$ is gauge equivalent to the algebra $(V,\phi')$ with
  the same $0$-ary structure map $\gamma$, but with higher
  multiplications of the form
\begin{equation}\label{opclaim12eq}
\phi'(m)(v_1, \ldots, v_i) = \bigl(\prod_{j=1}^i (c',v_j) \bigr) \cdot (\phi(m)(c, c, \ldots, c), c) \cdot c'.
\end{equation}
\end{enumerate}
\end{claim}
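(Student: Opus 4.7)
The plan is to mirror the proofs of Theorems \ref{curvtrivthm} and \ref{cyccurvtrivthm} in the operadic setting. First I would identify the relevant deformation complex: for $C(\mathcal{O})$-algebra structures on $V$ with fixed $0$-ary structure map $\gamma$, and hence fixed curvature $c = \gamma(\widetilde{m_2})$, this is the operadic convolution complex $C^\bullet := \Hom_{\SS}(\mathcal{O}^*, \End_V)$, graded by arity, equipped with the differential $[-,\gamma]$. Concretely $C^i \cong \Hom_{S_i}(\mathcal{O}(i)^* \otimes V^{\otimes i}, V)$, and $d: C^i \to C^{i-1}$ is determined by the duals $(\circ^{0,i}_k)^*$ of the $0$-ary operadic compositions, combined with $\gamma$. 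This specializes in the associative and Lie cases to the complex $C^\bullet$ appearing in Theorem \ref{curvtrivthm}.

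Next I would construct a contracting homotopy $s_i: C^i \to C^{i+1}$ generalizing formula \eqref{siref}. Fix an odd $\epsilon \in V^*$ with $\epsilon(c) = 1$; the weakly unital hypothesis $m_2(m_0, -) = \Id$ supplies a natural ``unit-insertion'' map $\iota: \mathcal{O}(i+1)^* \to \mathcal{O}(i)^*$, dual to plugging $m_2$ into the first input. Schematically one sets
\[
(s_i f)(o^*, v_1, \dots, v_{i+1}) := \epsilon(v_1) \cdot f(\iota(o^*), v_2, \dots, v_{i+1}).
\]
The key computation $d s_i + s_{i-1} d = \Id$ then uses $\epsilon(c) = 1$ together with $m_2(m_0, -) = \Id$: the ``fresh'' input produced by $\iota$ is matched either against a differential insertion of $c$ (which collapses via weak unitality) or against the $\epsilon$-contraction. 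Acyclicity of $C^\bullet$ then yields part (i) by the same obstruction-theoretic argument as in the proof of Theorem \ref{curvtrivthm}, introducing a formal parameter $\hbar$ to convert the statement into one about gauge equivalence classes of formal deformations of Maurer-Cartan elements.

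For part (ii), I would follow Lemma \ref{qil}. The cyclic deformation complex $B^\bullet$ sits inside $C^\bullet$ as the subcomplex cut out by cyclic invariance (using the $\SS_+$-module structure of the cyclic operad $\mathcal{O}$). Symmetrizing $s_i$ over cyclic permutations as in \eqref{sipfla1} produces $s_i': B^i \to B^{i+1}$, and I expect the analogue of \eqref{sipfla3} to exhibit a decomposition $B^\bullet = B^\bullet_0 \oplus B^\bullet_+$ on which $d s_i' + s_{i-1}' d$ acts as $k \cdot \Id$ on graded pieces $B^i_k$ of $B^\bullet_+$ while vanishing on $B^\bullet_0$. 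The complementary piece $B^\bullet_0$ should be spanned by the operadic analogues of the cocycles $\epsilon^{i+1}$ of \eqref{ceq}, namely iterated applications of $\epsilon$ paired with $c'$ through the cyclic inner product. Unwinding these cocycles against the inner product should yield exactly the expression \eqref{opclaim12eq}. \textbf{The main obstacles} will be (a) defining $\iota$ precisely enough that a single ``first input'' choice still delivers $d s_i + s_{i-1} d = \Id$ despite only having one-sided weak unitality of $m_2$, and (b) in the cyclic case, identifying the correct $\SS_+$-equivariant replacement for $\epsilon^{i+1}$ so that $B^\bullet_0$ really is an abelian sub-dgla with zero differential whose Maurer-Cartan elements correspond precisely to the multiplications in \eqref{opclaim12eq}.
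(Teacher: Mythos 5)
Your proposal follows essentially the same route as the paper's own (sketched) argument: the complex you write down is the dual presentation of $\CDer(\hat{C}_{\mathcal{O}}(V))$ with differential $\ad\xi$, and your homotopy---composing $m_2$ into the operad label while contracting one input with $\epsilon$, then cyclically symmetrizing for part (ii)---is exactly the paper's generalization of \eqref{siref} and \eqref{sipfla1}, leading to the same decomposition $B^\bullet = B^\bullet_0 \oplus B^\bullet_+$ and the same identification of the surviving cocycles \eqref{ceq2}. Your worry (a) about one-sidedness is already dispatched by the observation \eqref{m2ideq} that weak unitality forces $m_2(-\otimes\Id)=m_2(\Id\otimes-)$, so no additional idea is needed there.
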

Here, gauge equivalence is defined as follows.  For any
$C(\mathcal{O})$-algebra structure on $V$, one can form the formal dg
$\mathcal{O}$-algebra $\hat{C}_{\mathcal{O}}(V)$, which is defined as the completed
free $\mathcal{O}$-algebra generated by $\Pi V^*$:
\[
\hat{C}_{\mathcal{O}}(V)=\prod_{n=0}^\infty {\mathcal
  O}(n)\otimes_{S_n}(\Pi V^*)^{\otimes n} ,\] equipped with the
differential $d_{\hat{C}_{\mathcal{O}}(V)}$ on
$\hat{C}_{\mathcal{O}}(V)$ defined by the canonical linear map $\Pi V^*
\rightarrow \hat{C}_{\mathcal{O}}(V)$, which is shifted dual to a
restriction of the structure maps $C(\mathcal{O}) \rightarrow
\Hom(V^{\otimes n}, V)$.

Next, assume for the moment that $V$ is equipped with the zero
$C(\mathcal{O})$-algebra structure, i.e.,
$d_{\hat{C}_{\mathcal{O}}(V)} = 0$. We may then form the formal dgla
$\Der(\hat{C}_{\mathcal{O}}(V))$, where the differential is induced by
the differential on $V$ (and is zero in the case where $V$ was merely
a graded vector space, as in the setting of (non-dg) curved
$A_\infty$- or $L_\infty$-algebras considered in earlier sections of
this paper).  It is then a standard fact (see, e.g., \cite[Proposition
2.15]{GJohaii}, Proposition 2.15 where, however, this result is
formulated in the language of coalgebras) that the above yields a
bijection between square-zero odd derivations of
$\hat{C}_{\mathcal{O}}(V)$ and $C(\mathcal{O})$-structures on $V$.
Finally, we define two $C(\mathcal{O})$-structures on $V$ to be gauge
equivalent if the corresponding differentials
$d_{\hat{C}_{\mathcal{O}}(V)}$ and $d_{\hat{C}_{\mathcal{O}}(V)}'$ are
gauge equivalent, i.e., that there exists an even derivation $\xi \in
\Der^0(\hat{C}_{\mathcal{O}}(V))$ with zero constant term such that
$d_{\hat{C}_{\mathcal{O}}(V)}' = e^{\ad \xi}
d_{\hat{C}_{\mathcal{O}}(V)}$.

Similarly, if $\mathcal{O}$ is a cyclic operad, and $V$ is a cyclic
algebra over $\mathcal{O}$ (i.e., an algebra equipped with a
nondegenerate inner product compatible with the cyclic structure on
$\mathcal{O}$), then there is a natural $\ZZ/(m+1)$ action on the
subspace $\Pi V^* \otimes \mathcal{O}(m) \otimes_{S_m} (\Pi V^*)^{\otimes m}
\cong \Hom(\Pi V^*, \hat{C}_{\mathcal{O}}(V)) = \Der(\hat{C}_{\mathcal{O}}(V))$
coming from the $S_{m+1}$-structure on $\mathcal{O}(m)$ and the
compatible inner product on $V$.  Then, we can define the \emph{cyclic
  derivations}, $\CDer(\hat{C}_{\mathcal{O}}(V))$, to be the subspace of
$\ZZ/(m+1)$-invariants in each degree $m$.  Then, a gauge equivalence
$e^{\ad\xi}: d_{\hat{C}_{\mathcal{O}}(V)} \iso d_{\hat{C}_{\mathcal{O}}(V)}'$ is
\emph{cyclic} if $\xi$ is cyclic.

Finally, we explain the main idea of the proof of the claim (without
going into full detail).  We explain only the second part, since it is
more involved. We first form the cyclic deformation complex for the
$C(\mathcal{O})$-structure on $V$
structure with all higher operations $C(\mathcal{O})(i), i > 0$ acting as
zero, and with $0$-ary structure map $\gamma: C(\mathcal{O})(0) = \Pi\mathcal{O}(0)^* \to V$.
This complex is $\CDer(\hat{C}_{\mathcal{O}}(V))$, equipped with the
differential $\ad \xi$, where $\xi \in
\CDer(\hat{C}_{\mathcal{O}}(V))$ is the derivation obtained from $\Pi\gamma^*: \Pi V^* \to \mathcal{O}(0)$, by the formula (for $o \in \mathcal{O}(n)$ and $f_1, \ldots, f_n \in \Pi V^*$):
\begin{equation}
\xi(o \otimes_{S_n} (f_1 \otimes \cdots \otimes f_n)) = \sum_{i=1}^n (o \circ_i \Pi\gamma^*(f_i)) \otimes_{S_{n-1}} (f_1 \otimes \cdots \otimes \hat f_i \otimes \cdots \otimes f_n),
\end{equation}
where $\hat f_i$ denotes omitting $f_i$ from the tensor product.

Then, we again show that this complex is quasi-isomorphic
to the subcomplex spanned by cochains such that $\mathcal{O}(i)$ acts by
multiples of the element
\begin{equation}\label{ceq2}
\epsilon^{i+1}(x_1, \ldots, x_i) := \epsilon(x_1) \cdots \epsilon(x_i) c', \quad \epsilon(v) := (c', v),
\end{equation}
and hence that the original algebra structure is gauge equivalent to
one where all $\geq 1$-ary operations are multiples of the operation
\begin{equation}
\phi'(m)(v_1, \ldots, v_i) = \bigl(\prod_{j=1}^i (c',v_j) \bigr) \cdot c'.
\end{equation}
Finally, as before, we can see that this multiple is $(\phi(m)(c, c, \ldots, c), c)$.

The main technical step in the above is showing that the complex is
quasi-isomorphic to the subcomplex spanned by cochains as in
\eqref{ceq2}. This amounts to a generalization of Lemma \ref{qil}.
The main point is that the map $s_i'$ defined in \eqref{sipfla1} can
be generalized to this setting, as follows.  If $f = o
\otimes_{S_{n+1}} (f_1 \otimes \cdots \otimes f_{n+1}) \in
\mathcal{O}(n) \otimes_{S_{n+1}} (\Pi V^*)^{\otimes (n+1)}$, viewed as
an element of $\CDer(\hat{C}_{\mathcal{O}}(V))$, then $s_n f \in
\mathcal{O}(n+1) \otimes_{S_{n+2}} (\Pi V^*)^{\otimes(n+2)}$ is obtained
as
\[
s_n f = \sum_{j=1}^{n+1} (o \circ_j m_2) \otimes_{S_{n+2}} (f_1 \otimes \cdots \otimes f_{j-1} \otimes \epsilon \otimes f_j \otimes \cdots \otimes f_{n+1}).
\]
Then, one obtains a similar result to \eqref{sipfla3}, which gives the desired conclusion.

Next, Theorem \ref{acythm} generalizes as follows.  Let $\cO$ be a
cyclic operad with a weakly unital multiplication $m_2$, and let
$\mathcal{G}_{\mathcal{O}}$ be the graph complex constructed from the
operad $\mathcal{O}$.  This complex is a generalization of
Kontsevich's graph homology which has many equivalent definitions in
the literature; one definition is via the Feynman transform
construction of \cite{GKmo}.  Namely, consider the naive
$\Det$-twisted modular closure $\underline{\mathcal O}^1$ of $\mathcal
O$ by considering all contraction operations to be zero and all parts
of $\mathcal{O}$ of genus $\geq 1$ to be zero; see, e.g., \cite[\S
2]{ChLadft} for this notion. Then form the Feynman transform operad
$\mathsf{F}\underline{\mathcal O}^1$. The $0$-ary part
$\mathsf{F}\underline{\mathcal O}^1((0))$ is the desired graph
complex.  As before, let $\mathcal{G}_{\mathcal{O},c} \subseteq
\mathcal{G}_{\mathcal{O}}$ be the subcomplex spanned by connected
nonempty graphs; one has $\mathcal{G}_{\mathcal{O}} \cong \Sym
\mathcal{G}_{\mathcal{O},c}$.
\begin{claim}\label{opclaim2}
  The complex $\mathcal{G}_{\mathcal{O},c}$ is quasi-isomorphic to
  the quotient of the subcomplex spanned by graphs with at most one
  vertex of valence $\geq 2$ by the span of line segments with three
  vertices whose central vertex is labeled by $\Id$.\footnote{Note
    that, if $\mathcal{O}(0)$ is even one-dimensional, as in the
    preceding cases of $\mathscr{C}\!\mathit{omm}$ or $\mathscr{A}\!\mathit{ss}$, then these
    line segments with central vertex labeled by $\Id$ are already
    zero. More generally, the span of these line segments is
    isomorphic to $\wedge^2 \cO(0)$, by considering the labels at the
    univalent vertices.}
\end{claim}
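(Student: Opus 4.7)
The plan is to parallel the proof of Theorem \ref{acythm} step for step, using the weakly unital multiplication $m_2 \in \mathcal{O}(2)$ (which plays the role in the operadic graph complex that adding a univalent vertex plays in the pure graph complex) to construct a contracting homotopy on an associated graded complex.

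First, I would define \emph{interior} and \emph{exterior} vertices of a graph $\Gamma \in \mathcal{G}_{\mathcal{O},c}$ exactly as in the proof of Theorem \ref{acythm}, and filter $\mathcal{G}_{\mathcal{O},c}$ by the number of interior vertices. The associated graded decomposes into subcomplexes $\gr_{\Gamma_0} \mathcal{G}_{\mathcal{O},c}$ indexed by the induced subgraph $\Gamma_0$ on interior vertices, with the surviving differential only contracting edges incident to at least one exterior vertex. The vertex decorations by elements of $\mathcal{O}$ are carried along, and edge contraction uses the operadic composition in $\mathcal{O}$ as in the Feynman transform.

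Second, I would show that $\gr_{\Gamma_0} \mathcal{G}_{\mathcal{O},c}$ is acyclic whenever $\Gamma_0$ has at least one edge. Fix a half-edge $h$ of $\Gamma_0$ meeting a vertex $v$. For an oriented graph $\Gamma$ extending $\Gamma_0$ with decoration $o$ at $v$, let $s\Gamma$ be obtained by attaching a new univalent vertex to $v$ by a new edge inserted next to $h$ in the cyclic ordering at $v$, replacing the decoration at $v$ by $o \circ_h m_2$, and decorating the new univalent vertex so that contracting the new edge restores the decoration $o$ at $v$; the relevant identity is \eqref{m2ideq}, dualized appropriately for the Feynman transform. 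Choosing orientations so that $\Gamma$ appears with coefficient $+1$ in $d(s\Gamma)$ under contraction of the new edge, and verifying the standard sign-compatibilities as in the proof of the operadic generalization of Lemma \ref{qil} sketched above, yields $sd + ds = \Id$, establishing acyclicity.

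Third, I would compute the remaining associated graded pieces $\gr_{\pt}$ and $\gr_{\emptyset}$. The complex $\gr_{\pt} \mathcal{G}_{\mathcal{O},c}$ is spanned by star-shaped graphs with a single interior vertex, whose differential only contracts edges to exterior vertices of valence $\geq 2$; a secondary filtration by the valence of the central vertex (as in the proof of Theorem \ref{acythm}) shows its homology is concentrated on stars with univalent leaves. The complex $\gr_\emptyset \mathcal{G}_{\mathcal{O},c}$ consists of line segments, and the quotient in the claim by three-vertex line segments with middle vertex labeled by $\Id$ accounts for the identification $o \circ_1 \Id = o = o \circ_2 \Id$ in $\mathcal{O}(1)$ that is already built into the graph complex. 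A routine check then shows the spectral sequence of the interior vertex filtration collapses at the $E_1$-page, identifying the homology of $\mathcal{G}_{\mathcal{O},c}$ with the quotient described in the claim.

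The main obstacle is the second step: one must verify that the operadic contracting homotopy $s$ satisfies $sd + ds = \Id$ while simultaneously keeping track of the Feynman transform's Koszul sign conventions, the $S_{n+1}$-equivariance of the vertex decorations, and the combinatorics of inserting $m_2$ at a half-edge in a cyclically-ordered corolla. This is the graph-complex analog of the operadic generalization of the homotopy $s_i'$ in \eqref{sipfla1} sketched for the $C(\mathcal{O})$-algebra deformation complex above the statement of the claim, and I expect that the same weak unitality identities will carry the verification through.
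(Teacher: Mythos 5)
Your strategy is the paper's own --- filter by the number of interior vertices, contract $\gr_{\Gamma_0}$ for $\Gamma_0$ containing an edge via a homotopy that attaches a univalent vertex through $m_2$ and a one-sided inverse $m_0 \in \mathcal{O}(0)$ of it, then analyze $\gr_{\pt}$ and $\gr_\emptyset$ --- but two of your steps fail as literally written, and they are precisely the points where the operadic case differs from Theorem \ref{acythm}. First, you cannot take the interior/exterior dichotomy ``exactly as in the proof of Theorem \ref{acythm}'': the definition must see the labels. A valence-$\geq 2$ vertex should count as exterior only when the pendant chains it supports consist of univalent vertices and bivalent vertices labeled by multiples of $\Id \in \mathcal{O}(1)$. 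With the purely combinatorial definition, a pendant chain of bivalent vertices carrying arbitrary $\mathcal{O}(1)$-labels is exterior, so your $\gr_\emptyset$ becomes the full two-sided bar-type complex with terms $\mathcal{O}(0)\otimes\mathcal{O}(1)^{\otimes k}\otimes\mathcal{O}(0)$, whose homology is a Tor-group over $\mathcal{O}(1)$ and is not in general concentrated on two-vertex segments; the identification of the first page then fails. Relatedly, your explanation of the quotient is off: the $\Id$-labeled three-vertex segments must be removed because they are exactly the graphs with a vertex of valence $\geq 2$ that nevertheless have \emph{no} interior vertex, hence are already accounted for in filtration degree zero --- not because the unit axiom $o \circ \Id = o$ is ``built into the complex.''

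Second, the spectral sequence does not collapse at $E_1$, and the homology of $\mathcal{G}_{\mathcal{O},c}$ is not ``identified with the quotient'' as a graded vector space. Unlike the associative and commutative cases, where orientation/parity arguments force the differential on the surviving star-shaped graphs to vanish, the quotient complex here carries a genuinely nonzero differential: contracting the edge joining the central vertex, labeled by $o \in \mathcal{O}(m)$, to a univalent vertex labeled by $p \in \mathcal{O}(0)$ produces the star with central label $o \circ_i p$. The claim asserts a quasi-isomorphism with this complex, not a computation of its homology; correspondingly the spectral sequence of the interior-vertex filtration degenerates only at the third page, with the extra differential reassembling this leaf-contraction differential. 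Your description of the contracting homotopy itself (using \eqref{m2ideq} and a preimage of $\Id$ under $m_2(-\otimes\Id)$ to label the new univalent vertex) does match the paper's, modulo replacing the ribbon-graph ``cyclic ordering'' by the ordering of inputs of the operad label; but the two issues above need to be repaired for the argument to go through.
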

In other words, $\mathcal{G}_{\mathcal{O},c}$ is quasi-isomorphic to a
quotient of the deformation complex of a certain canonical
cyclic $C(\mathcal{O})$-algebra (which we think of as a type of curved
$((\mathcal{O}')^!)_\infty$-algebra), as follows.\footnote{This
  interpretation requires that $\mathcal{O}(0)$ be finite-dimensional,
  as we are assuming.  For infinite-dimensional formal
  $\mathcal{O}(0)$, while the claim above still holds, this
  interpretation is technically not available.} Let $V :=
\mathcal{O}(0)^*$. View $V$ as a $\mathcal{O}$-algebra with all $\geq
1$-ary operations trivial, and with $0$-ary structure $\Id:
\mathcal{O}(0)^* \to \mathcal{O}(0)^*$. Since all higher operations
are trivial, any inner product on $V$ is cyclic; we can fix one but it
will not really affect anything.  Then, the deformation complex of $V$
as a $C(\mathcal{O})$-algebra is
$$\CDer(\hat{C}_{\mathcal{O}}(V)) \cong \bigoplus_{m \geq 0} \mathcal{O}(m) \otimes_{S_{m+1}} \mathcal{O}(0)^{\otimes m+1},$$
equipped with the differential $\ad \xi$ where $\xi \in
\CDer(\hat{C}_{\mathcal{O}}(V))$ is the element corresponding to $\Id
\in V \otimes \mathcal{O}(0) \cong \Hom(V, V) \subseteq
\Der(\hat{C}_{\mathcal{O}}(V))$.  By the above,
$\mathcal{G}_{\mathcal{O},c}$ is quasi-isomorphic to the quotient
of this deformation complex by the subcomplex in arity $2$, $\Id
\otimes_{S_2} \mathcal{O}(0)^{\otimes 2} \subseteq \bigoplus_{m \geq 0}
\mathcal{O}(m) \otimes_{S_{m+1}} \mathcal{O}(0)^{\otimes m+1}$.

The proof of Claim \ref{opclaim2} is a direct generalization of the
proof of Theorem \ref{acythm}.  First, we generalize the notion of
interior vertex.  Note that $\mathcal{G}_{\mathcal{O},c}$ is
spanned by oriented graphs of the following form: the vertices are
ordered, and each $m$-valent vertex is labeled by an element of
$\cO(m-1)$.  Next, the half-edges are also ordered.  Many of these
graphs are equivalent: applying a permutation of the half-edges
incident to a given vertex is set equal to applying the corresponding
permutation to the element of $\cO$ labeling that vertex; also,
applying any permutation of the half-edges is set equal to multiplying
by the sign of that permutation. Finally, applying a permutation to
the vertices is the same as multiplying by the sign of that
permutation.

Then, an interior vertex of a graph as above is a vertex which has
valence $\geq 2$ and whose removal does not result in a graph one of
whose connected components consists only of univalent vertices and
bivalent vertices labeled by elements of $\CC \cdot \Id \subseteq
\cO(1)$.  As before, the number of interior vertices defines an
increasing filtration. Moreover, if we choose an arbitrary basis
$\{\Gamma_i\}$ of graphs of $\mathcal{G}_{\mathcal{O},c}$ (and
possibly the empty graph and the graph with a vertex and no edges),
then the associated graded complex is graded by this basis, where
$\gr_{\Gamma_i} \mathcal{G}_{\mathcal{O},c}$ is spanned by graphs
whose restriction to interior vertices yields $\Gamma_i$.

The main step is to generalize the construction of the contracting
homotopy which shows that $\gr_{\Gamma_0}
\mathcal{G}_{\mathcal{O},c}$ is acyclic when $\Gamma_0$ is a graph
containing an edge. To do so, first fix an element $m_0 \in \cO(2)$
such that $m_2 \circ_1 m_0 = m_2 \circ_2 m_0 = \Id \in \cO(1)$.
Fix a half edge $h$ of $\Gamma_0$ based at a vertex $v$.  Let us
choose $v$ to be the last vertex in the ordering of vertices, and $h$
to be the last half-edge in the ordering of half-edges based at $v$.
Suppose $v$ is $m$-valent, and let the label of $v$ be $o_v \in
\cO(m-1)$. Call the half-edges of $v$, in order, $h_1, \ldots, h_m$,
with $h = h_m$.  The contracting homotopy $s$ then acts on any graph
$\Gamma \in \gr_{\Gamma_0} \mathcal{G}_{\mathcal{O},c}$ by first
adding a new half-edge $h_{m+1}$ to $v$ (last in the ordering at $v$).
Then, the label $o_v$ is replaced by $o_v \circ_m m_2$, where $m_2 \in
\cO(2)$ is the weakly unital multiplication. Finally, one adds a new
univalent vertex $y$ to $\Gamma$ (which becomes the last vertex),
labels it by $m_0$, and attaches it to $h_{m+1}$.  It is then
straightforward to verify that $(sd + ds) \Gamma = \Gamma$, and hence
$\gr_{\Gamma_0} \mathcal{G}_{\mathcal{O},c}$, is acyclic.

Finally, we explain the appearance of the quotient by line segments
with three vertices whose central vertex is labeled by $\Id$.  Namely,
these are exactly the graphs with a single vertex of valence $\geq 2$
which, nonetheless, have no interior vertices.  By a generalization of
the arguments in the proof of Theorem \ref{acythm}, $\gr_{\pt}
\mathcal{G}_{\mathcal{O},c}$ is quasi-isomorphic to the quotient of
the subcomplex spanned by star-shaped graphs with a single vertex of
valence $\geq 2$ by the span of these graphs.  On the other hand,
$\gr_{\emptyset} \mathcal{G}_{\mathcal{O},c}$ is quasi-isomorphic
to the subcomplex spanned by graphs with two vertices, each univalent.
Thus, the second page of the spectral sequence for the interior vertex
filtration yields, in degree one, the claimed quotient
complex modulo the graphs with only two vertices, and in degree zero,
the span of graphs with only two vertices and one edge.  The spectral
sequence collapses at the third page to the homology of the whole
quotient subcomplex stated in Claim \ref{opclaim2}, which proves the
result. We omit further details.

Finally, one can deduce an unstable version of Claim \ref{opclaim2},
analogous to Theorem \ref{unscurveduncurved}:
\begin{claim}\label{opclaim3}
The composition
\begin{equation}
\CE_\bullet(\CDer^0(\hat{C}_{\mathcal{O}}(V))) \to \CE_\bullet(\CDer(\hat{C}_{\mathcal{O}}(V)))
\rightarrow \CE_\bullet(\CDer(\hat{C}_{\mathcal{O}}(V \oplus W)))
\end{equation}
is zero on homology for any uncurved cyclic $C(\mathcal{O})$-algebra $V$,
where $W$ is an inner product space with $W_0 \neq 0$.
\end{claim}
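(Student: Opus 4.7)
The plan is to adapt the proof of Theorem \ref{unscurveduncurved} to the operadic setting, replacing Theorem \ref{cyccurvtrivthm} by Claim \ref{opclaim}(ii) throughout. The argument has two main ingredients: an operadic analogue of the ``central element lemma'' used in the proof of Theorem \ref{unscurveduncurved}, and a universal Maurer-Cartan coefficient argument.

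First, I would prove the operadic central element lemma: if $(V, \phi)$ is an uncurved cyclic $C(\mathcal{O})$-algebra with structure derivation $\xi \in \CDer^0(\hat{C}_{\mathcal{O}}(V))$, and $c \in V_0$ is a nonzero even element whose associated constant cyclic derivation $c_0 \in \CDer(\hat{C}_{\mathcal{O}}(V))$ (coming from the $0$-ary structure $\mathcal{O}(0)^* \to V$ dual to $c$) satisfies $[c_0, \xi] = 0$, then the image of the unstable characteristic class of $(V, \phi)$ in $\CE_\bullet(\CDer(\hat{C}_{\mathcal{O}}(V)))$ is a boundary. To prove this, I would work over a formal parameter $\lambda$ with the one-parameter family of Maurer-Cartan elements $\xi^\lambda := \xi + \lambda c_0 \in \CDer(\hat{C}_{\mathcal{O}}(V))[[\lambda]]$ (centrality ensures $[\xi^\lambda,\xi^\lambda] = 0$). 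By Claim \ref{opclaim}(ii) applied over $\CC[[\lambda]]$ with curvature $\lambda c$, one obtains a gauge $\eta^\lambda \in \CDer^0(\hat{C}_{\mathcal{O}}(V))[[\lambda]]$ with no constant or linear term such that $e^{\ad \eta^\lambda}(\xi^\lambda) = (\xi^\lambda)'$, where $(\xi^\lambda)'$ has higher operations of the specific form \eqref{opclaim12eq}. The characteristic classes of $\xi^\lambda$ and $(\xi^\lambda)'$ are therefore homologous in $\CE_\bullet(\CDer(\hat{C}_{\mathcal{O}}(V)))[[\lambda]]$ via a coboundary well-defined in $\lambda$, and setting $\lambda = 0$ (which recovers $\xi$ on the left) yields the lemma.

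Given this lemma, I would apply it to the trivial extension $V \oplus W$: since $W_0 \neq 0$, any nonzero $c \in W_0$ is central in $V \oplus W$ (because all operations with arguments in $W$ vanish), so the stabilized image of the characteristic class of $(V,\phi)$ in $\CE_\bullet(\CDer(\hat{C}_{\mathcal{O}}(V \oplus W)))$ is a boundary. To pass from specific Maurer-Cartan elements to the entire homology map, I would invoke the universal Maurer-Cartan argument from the proof of Theorem \ref{unscurveduncurved}. Setting $\mathfrak{h} := \CDer^0(\hat{C}_{\mathcal{O}}(V))$ and $\mathfrak{a} := \CE^\bullet(\mathfrak{h})$, the identity $\Id_{\mathfrak{a}}$ corresponds to a Maurer-Cartan element of $\mathfrak{h} \otimes \mathfrak{a}$, and its associated characteristic class in $\widehat{\CE_\bullet(\mathfrak{h}) \otimes \mathfrak{a}} \cong \End(\CE^\bullet(\mathfrak{h}))$ is the identity. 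Applying the central element lemma over $\mathfrak{a}$ to this universal Maurer-Cartan element shows that its stabilization is an $\mathfrak{a}$-linear boundary, which is equivalent to the map in Claim \ref{opclaim3} being zero on homology.

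The main obstacle is the $\lambda \to 0$ limit in the central element lemma: for general $\mathcal{O}$ the higher operations of $(\xi^\lambda)'$ given by \eqref{opclaim12eq} do not visibly vanish as $\lambda \to 0$, because the dual $c'$ paired against the curvature $\lambda c$ must be scaled as $c'/\lambda$, which cancels the multilinear scaling of $\phi(m)(\lambda c, \ldots, \lambda c)$. One therefore cannot simply take a naive limit of algebras; instead one must verify that the gauge $\eta^\lambda$ (and hence the cochain witnessing the desired homology relation) admits a $\lambda$-adic presentation whose specialization at $\lambda = 0$ is a genuine coboundary in $\CE_\bullet(\CDer(\hat{C}_{\mathcal{O}}(V)))$. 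This is the analogue of the corresponding technical point in the $A_\infty$ portion of the proof of Theorem \ref{unscurveduncurved}, and is carried out by tracing through the obstruction-theoretic construction in the proof of Claim \ref{opclaim}.
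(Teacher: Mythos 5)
Your proposal is essentially the paper's intended argument: the paper's "proof" of this claim consists of the single sentence "combine the preceding material with \S\ref{unstabsec}," i.e., rerun the proof of Theorem \ref{unscurveduncurved} --- the central-element lemma followed by the universal Maurer--Cartan coefficient trick with $\mathfrak{a}=\CE^\bullet(\mathfrak{h})$ --- with Claim \ref{opclaim}(ii) substituted for Theorem \ref{cyccurvtrivthm}, which is exactly what you do. One remark on your final paragraph: in the only instance of the central-element lemma that the claim actually requires, the central element $c$ lies in $W_0$ and every positive-arity operation of the trivial extension $V\oplus W$ vanishes when any argument lies in $W$, so $(\phi(m)(c,\dots,c),c)=0$ and the normal form \eqref{opclaim12eq} is identically zero; hence the $\lambda$-cancellation you flag never arises here, and the passage to $\lambda=0$ is handled not by controlling the gauge $\eta^\lambda$ but simply by observing that the cycles $(\xi+\lambda c_0)^{n}-(\lambda c_0)^{n}$ depend polynomially on $\lambda$ while the subspace of boundaries is closed in the formal topology, so membership for all $\lambda\neq 0$ forces membership at $\lambda=0$.
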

We omit the proof, which is obtained by combining the preceding
material with \S \ref{unstabsec}.
\section{Examples and further comments}
In this section we provide some remarks and examples regarding the
material of the previous section.

\subsection{Generalization to the modular case}
Here we briefly sketch how the results of the previous section
can be extended from the
setting of cyclic operads to that of modular operads.

An analogous result to Claim \ref{opclaim2} holds for the
\emph{twisted} version of the $\mathcal O$-graph complex which
corresponds to the Feynman transform ${\mathsf F}\underline{\mathcal
  O}((0))$ (as opposed to ${\mathsf F}\underline{\mathcal O}^1((0))$)
which is the usual version of the graph complex.\footnote{We note that
  the operads ${\mathsf F}\underline{\mathcal O}^d((n))$ are not,
  strictly speaking, modular operads in the sense of \cite{GKmo},
  since the stability condition is not satisfied; however the
  construction of the Feynman transform still makes sense and this
  causes us no trouble. We will ignore this point henceforth.}  For
example, in the commutative and ribbon graph case the difference
between two types of graph complex lies in a different notion of
orientation: a twisted orientation corresponds to ordering edges of a
graph, as opposed to ordering the half edges and vertices.

More generally, one can replace cyclic operads $\cO$ by arbitrary
(twisted) modular operads. Then, if $\cO$ admits a weakly unital
multiplication, one still obtains in the same manner the result that
the graph complex for $\cO$ is quasi-isomorphic to the subcomplex
defined analogously to the above.

Similarly, we can generalize all the constructions of the preceding
section from the cyclic to the modular setting. Let $\cO$ be an
arbitrary (twisted) modular operad $\cO$.  In this case, one replaces
$C(\cO)$ (used in the cyclic case above) by the Feynman transform
(twisted) modular operad, ${\mathsf F}\mathcal O$, and modular
algebras over this operad are then thought of as curved algebras. The
constructions, results (Claims \ref{opclaim}.(ii) and \ref{opclaim3}),
and proofs carry over to this setting.  In the special case where one
has a cyclic operad and considers it $\Det$-twisted modular using the
naive $\Det$-twisted modular closure, one recovers the above results.
Another example concerns so-called quantum $A_\infty$-algebras: Let
$\mathcal{O}'$ be the $\Det$-twisted modular closure of
$\mathscr{A}\!ss$ such that ${\mathsf F}\mathcal O'$-algebras are
so-called quantum $A_\infty$-algebras (see for instance \cite[Example
5.2]{ChLafdmm} for an explanation of this notion). Then, if we let
$\mathcal{O} = \mathcal{O}' \oplus \CC[0]$ be the unital version
(adding a $0$-ary operation providing a unit for the associative
multiplication), then ${\mathsf F}\mathcal O$ defines a notion of
curved quantum $A_\infty$-algebras.  As in the cyclic $A_\infty$ case,
one sees that nontrivially curved quantum $A_\infty$-algebra
structures on a fixed vector space are gauge equivalent to those where
all the operations of positive arity are of the form
\eqref{opclaim12eq}, and in particular all land in a fixed
one-dimensional vector space. We refrain from making precise
statements.

\subsection{Examples}
Here we provide examples of the preceding constructions for Poisson, Gerstenhaber, BV, and permutation (or pre-Lie) algebras.
\begin{example}\label{pgexam} Consider the case of Poisson algebras.
  It is natural to consider unital Poisson algebras, where here a unit
  $f$ is an element satisfying $\{1, f\} = \{f, 1\} = 0$ for all $f$,
  whereas $1 \cdot f = f \cdot 1 = f$. Let $\mathcal{O}$ be the operad
  governing unital Poisson algebras, i.e., $\mathcal{O} =
  u\mathscr{P}\!\textit{oiss} = \mathscr{P}\!\textit{oiss} \oplus
  \CC[0]$ where $\mathscr{P}\!\textit{oiss}$ is the Poisson operad and
  $\CC[0]$ is the one-dimensional vector space concentrated in degree
  zero, which has zero compositions with the bracket of $\mathcal{O}$,
  and composition $m_2(m_0 \otimes \Id) = \Id = m_2(\Id \otimes m_0)$
  with the commutative multiplication, i.e.,
  $\mathscr{P}\!\textit{oiss} \supset u\mathscr{C}\!\textit{omm}$.
  Then, by the above, one can think of $C(\mathcal{O})$-algebras as
  curved Poisson-infinity algebras (since Poisson is Koszul self-dual,
  as in the associative case), and it follows from Claim
  \ref{opclaim}.(i) that nontrivially curved Poisson-infinity structures
  on a vector space are all gauge equivalent.

  Similarly, the Gerstenhaber operad $\mathscr{G}\!\textit{erst}$ is
  Koszul and dual to its suspension $\mathfrak{s}
  \mathscr{G}\!\textit{erst}$; here the suspension is defined by
  tensoring by the endomorphism operad of the one-dimensional odd
  vector space (as a $\ZZ/2$-graded $\SS$-module, this means that one
  applies $\Pi$ to the even-ary part).  As before, one can consider
  unital $\mathscr{G}\!\textit{erst}$ algebras,
  $u\mathscr{G}\!\textit{erst}$, and similarly its suspension $\mathfrak{s}(u
  \mathscr{G} \! \textit{erst})$. Let $\cO$ be this latter operad,
  which we can also think of as $u
  \mathscr{G}\!\textit{erst}^!$. Here, the unit is odd.  We define
  curved $\mathscr{G}\!\textit{erst}_\infty$ algebras as algebras over
  $C(\cO)$, where now the curvature is odd, rather than even.  From
  Claim \ref{opclaim}.(i), we deduce that any two nontrivially curved
  Gerstenhaber-infinity algebra structures on the same graded vector
  space are gauge equivalent.

  In the case of the Poisson operad (but not for the Gerstenhaber operad),
  in fact $\mathscr{P}\!\textit{oiss}$ is cyclic, and hence one obtains
  the notion of cyclic curved Poisson-infinity algebras. Claim
  \ref{opclaim}.(ii) then implies that all nontrivially curved cyclic
  Poisson-infinity structures on a vector space are gauge equivalent
  to those for which all operations of positive arity are of the form
  \eqref{opclaim12eq}, and in particular all land in a fixed
  one-dimensional vector space.

  Moreover, in fact one can compute the associated (unital Poisson)
  graph homology. By Claim \ref{opclaim2}, the associated graph
  complex is quasi-isomorphic to the subcomplex spanned by graphs
  whose vertices are all univalent except for at most one.  Since the
  Poisson operad is the associated graded operad of the associative
  operad, and in particular has the same $\SS$-module structure, one
  sees that this subcomplex has zero differential, and is spanned by
  the star-shaped graphs with central vertex of odd valence.  That is,
  the graph homology for the unital Poisson operad is isomorphic to
  the homology of the graph complex $\widetilde{\mathcal{G}_r}$ for the unital
  associative operad.
\end{example}
\begin{example}\label{bvexam} The BV operad $\BV$ \cite{Getbva} is the homology operad of the operad of framed little discs; it is known to be cyclic. Its algebras, called $\BV$-algebras, are Gerstenhaber algebras together with an odd operator $\Delta$ which is a differential operator of second order with respect to the commutative multiplication and a derivation of the odd Lie bracket.  Let $\overline{\BV}$, as before, denote the augmentation ideal of the operad $\BV$.
  The $C(\overline{\BV})$-graph complex computes, according to
  \cite{Giafl2d}, the homology of the classifying space of diffeomorphism
  groups of 3-dimensional oriented handlebodies.  Although the operad
  $\BV$ is not defined by (homogeneous) quadratic relations, in
  \cite{GTVhbva} it was shown to be Koszul in a more general sense,
  and its Koszul dual, $\BV^!$ was described, and shown to be
  quasi-isomorphic to $C(\overline{\BV})$.

  A unital $\BV$-algebra is a $\BV$-algebra with a unit with respect
  to the commutative multiplication and such that the value of
  $\Delta$ on the unit is zero.

  Consider the operad ${\mathcal O} =u\BV$ governing unital
  $\BV$-algebras; then we can view $C(\mathcal O)$ as the operad
  governing curved $\BV^!_\infty$-algebras. It follows that all
  nontrivially curved $\BV^!_\infty$-algebras are gauge equivalent.
Furthermore, according to Claim
  \ref{opclaim}.(ii) all nontrivially curved cyclic $\BV^!_\infty$-algebras are gauge equivalent
  to those for which all operations of positive arity are of the form
  \eqref{opclaim12eq}.

  Finally, by Claim \ref{opclaim2}, the graph complex for $\mathcal O$
  is quasi-isomorphic to the subcomplex with at most one vertex of
  valence $\geq 2$.
\end{example}
\begin{example}\label{permexam} Consider the case of (right) pre-Lie
  algebras, i.e., those with a single operation $\star$ satisfying the
  relation
\begin{equation}
x \star (y \star z) - (x \star y) \star z = x \star (z \star y) - (x \star z) \star y.
\end{equation}
For such algebras, it makes perfect sense to define the notion of a unit, $1$, such that
\begin{equation} \label{prelieunit}
1 \star x = x = x \star 1.
\end{equation}
We thus obtain an operad $\mathcal{O} = u \mathscr{P}\!\textit{re-Lie}
= \mathscr{P}\!\textit{re-Lie} \oplus \CC[0]$ governing \emph{unital}
pre-Lie algebras, where now the compositions with $\CC[0]$ are given
by \eqref{prelieunit}, or more precisely, $m_2(m_0 \otimes \Id) = \Id
= m_2(\Id \otimes m_0)$ where $m_0 = 1 \in \CC[0]$ and $m_2 \in
\mathscr{P}\!\textit{re-Lie}[2]$ is the multiplication operation
$\star$.  By the above procedure, we then obtain a type of curved
algebra, namely algebras over $C(\mathcal{O})$.  We will call these
 \emph{curved (right)
  $\mathscr{P}\!\textit{erm}_{\infty}$-algebras}, since the operad
(right) $\mathscr{P}\!\textit{erm}$ is Koszul dual to (right)
$\mathscr{P}\!\textit{re-Lie}$. In particular, curved
  $\mathscr{P}\!\textit{erm}_\infty$ algebras with zero curvature
  ($m_0=0$, i.e., the corresponding derivation of $C_{\mathcal{O}}(V)$
  has zero constant term) are the same as ordinary
  $\mathscr{P}\!\textit{erm}_\infty$ algebras.  Let us recall here
  that ordinary permutation algebras are algebras with an operation
  $\circ$ satisfying the relation
\begin{equation}
x \circ (z \circ y) = x \circ (y \circ z) = (x \circ y) \circ z,
\end{equation}
i.e., associative algebras additionally satisfying the first equality
above.  The operad $\mathscr{P}\!\textit{erm}$ is the one whose algebras are
permutation algebras.

From the above results, we deduce that any two curved
$\mathscr{P}\!\textit{erm}_\infty$-algebra structures on $V$ with nonzero curvature
are gauge-equivalent.

Note that, strictly speaking the operad $\mathscr{P}\!\textit{re-Lie}$ is not cyclic, it is
anticyclic \cite[p. 9]{GKcoch}; however we can ignore this difference; any anticyclic operad gives rise to a cyclic one via the operadic suspension. Therefore, we obtain the corresponding result on the classification of nontrivially curved (anti)cyclic $\mathscr{P}\!\textit{erm}_\infty$-algebras, and on
$u\mathscr{P}\!\textit{re-Lie}$-graph homology.
\end{example}
\subsection{On the cobar construction of unital operads}
Finally, we remark that there is a certain subtlety associated with
taking cobar-constructions of unital operads (such as an operad with a
weakly unital multiplication), as we do. Let $\mathcal O$ be
such an operad. Then it is easy to see that for any $n\geq 0$ the
complex $C{\mathcal O}(n)$ is contractible. However it does not follow
that any $C\mathcal O$-algebra is gauge equivalent to a trivial
one. Indeed, one can take $\mathcal O$ to be the operad $u{\mathscr
  A}\!\textit{ss}$ or $u{\mathscr C}\!\textit{omm}$ governing unital
associative or unital commutative algebras; then $C\mathcal
O$-algebras (on a graded vector space $V$ with zero differential) will
be curved $A_\infty$- or $L_\infty$-algebras, respectively, and there is
no reason for an arbitrary curved $A_\infty$- or $L_\infty$-algebra to
be gauge equivalent to a trivial one.  In fact, even if we let
$\mathcal O$ be the operad ${\mathscr A}\!\textit{ss}$ or ${\mathscr
  C}\!\textit{omm}$, and take the bar construction of it (i.e., of the
whole unital operad, rather than just the augmentation ideal as one
usually does in these cases), we again obtain that $C \mathcal
O$-algebras will be ordinary $A_\infty$- or $L_\infty$-algebras, with
the usual (nontrivial) gauge equivalence relation, even though $C
\mathcal{O}$ remains acyclic.

The explanation of this apparent paradox is that the operad $C\mathcal
O$ is \emph{not cofibrant}; see \cite{BMahto} for this notion. An algebra
over $C\mathcal O$ is a map from $C\mathcal O$ to an endomorphism
operad of a dg vector space and this map is not necessarily homotopic
to zero even though $C\mathcal O$ is acyclic. A similar phenomenon
occurs when considering a cobar-construction $(T\Pi V^*,d)$ for a
unital associative algebra $V$; dg maps from $T\Pi V^*$ to the field
$\CC$ are the Maurer-Cartan elements in $V$, i.e. the odd elements
$v\in V$ for which $dv+v^2=0$. Such elements need not be gauge
equivalent to zero despite $(T\Pi V^*,d)$ being acyclic; again,
precisely because $(T\Pi V^*,d)$ is not a cofibrant dga.

Furthermore, given a cyclic operad $\mathcal O$ as above we can form its $\Det^d$-modular closure $\overline{\mathcal O}^d$ and its naive $\Det^d$-modular closure $\underline{\mathcal O}^d$. Then the same reasoning
shows that the complexes ${\mathsf F}\overline{\mathcal O}^d((n))$  and ${\mathsf F}\underline{\mathcal O}^d((n))$ are acyclic
for $n>0$; here ${\mathsf F}{\mathcal O}$ is the Feynman transform of
$\mathcal O$.

Next, the complexes  ${\mathsf F}\underline{u{\mathscr C}\!\textit{omm}}^1((0))$ and ${\mathsf
  F}\underline{u{\mathscr A}\!\textit{ss}}^1((0))$ are
nothing but our graph complexes $\widetilde{\mathcal G}$ and
$\widetilde{\mathcal G}_r$. For $n > 0$, ${\mathsf F}\underline{u{\mathscr C}\!\textit{omm}}^1((n))$ and ${\mathsf
  F}\underline{u{\mathscr A}\!\textit{ss}}^1((n))$ are similar except they are
spanned by graphs which are additionally equipped with $n$ external
labeled edges (legs) which are not allowed to be contracted. When legs
are present, these graph complexes are therefore acyclic.  However,
the vacuum (legless) part  ${\mathsf F}\overline{\mathcal O}^d((n))$  and ${\mathsf F}\underline{\mathcal O}^d((n))$ of the
Feynman transform need not be acyclic, as our results demonstrate.

\vskip 10 pt
\noindent
\bibliographystyle{amsalpha}
\def\cprime{$'$}
\providecommand{\bysame}{\leavevmode\hbox to3em{\hrulefill}\thinspace}
\providecommand{\MR}{\relax\ifhmode\unskip\space\fi MR }
\providecommand{\MRhref}[2]{%
  \href{http://www.ams.org/mathscinet-getitem?mr=#1}{#2}
}
\providecommand{\href}[2]{#2}

\end{document}